\documentclass[onefignum,onetabnum,dvipsnames]{siamart190516}
\usepackage{amsfonts,amsmath,amssymb,bm,hyperref,algorithm,algpseudocode,algorithmicx,setspace,arydshln}

\title{Krylov subspace residual and restarting for certain second order
differential equations}

\author{%
M.A. Botchev%
\thanks{Keldysh Institute of Applied Mathematics,
Russian Academy of Sciences, Miusskaya~Sq.~4, Moscow 125047,
Russia,
\email{botchev@ya.ru}.
}
\and
L.A. Knizhnerman%
\thanks{Marchuk Institute of Numerical Mathematics, Russian Academy of Sciences,
Gubkin St.~8, Moscow 119333, Russia, \email{lknizhnerman@gmail.com}.
Work of this author is supported by the Moscow Center of Fundamental and Applied Mathematics at INM RAS (Agreement No.~075-15-2022-286 with the Ministry of Education and Science of the Russian Federation).}
\and
M. Schweitzer%
\thanks{School of Mathematics and Natural Sciences, Bergische Universit\"at Wuppertal, 42097 Wuppertal, Germany, \email{marcel@uni-wuppertal.de}.}
}

\newcommand{\Cc}{\mathbb{C}}
\newcommand{\calO}{\mathcal{O}}

\newcommand{\geqs}{\geqslant}

\newcommand{\leqs}{\leqslant}

\renewcommand{\Re}{\mathrm{Re}}
\newcommand{\Rr}{\mathbb{R}}
\newcommand{\Rrnn}{\mathbb{R}^{n\times n}}
\newcommand{\tol}{\mathtt{tol}}
\newcommand{\vx}{\bm{x}}

\def\be#1\ee{\begin{equation}#1\end{equation}}
\newcommand{\bea}{\begin{eqnarray}}
\newcommand{\eea}{\end{eqnarray}}
\newcommand{\beas}{\begin{eqnarray*}}
\newcommand{\eeas}{\end{eqnarray*}}

\newcommand{\tT}{{\mbox{\it \tiny T}}}

\renewcommand{\Re}{\mathop{\rm Re}\nolimits}

\DeclareMathOperator{\Real}{Re}

\DeclareMathOperator{\nnz}{nnz}

\newtheorem{remark}[theorem]{Remark}
\let\oldremark\remark
\renewcommand{\remark}{\oldremark\normalfont}

\usepackage{pgfplots}
\usepgfplotslibrary{groupplots}
\usetikzlibrary{external}

\pgfkeys{/pgf/images/include external/.code={\includegraphics{#1}}}

\usetikzlibrary{decorations.markings}
\makeatletter
\tikzset{
  nomorepostactions/.code={\let\tikz@postactions=\pgfutil@empty},
  mymark/.style 2 args={decoration={markings,
    mark= between positions 0 and 1 step (1/9)*\pgfdecoratedpathlength with{%
        \tikzset{#2,every mark}\tikz@options
        \pgfuseplotmark{#1}%
      },  
    },
    postaction={decorate},
    /pgfplots/legend image post style={
        mark=#1,mark options={#2},every path/.append style={nomorepostactions}
    },
  },
}
\makeatother

\pgfplotscreateplotcyclelist{list_std}{%
Cerulean,line width=1.25pt, mark=x, solid\\
BurntOrange,line width=1.25pt, mark=o, solid\\
OliveGreen,line width=1.25pt, mark=diamond, solid\\
Orchid,line width=1.25pt, mark=square, solid\\
}
\pgfplotsset{compat=1.17}

\begin{document}
\maketitle

\begin{abstract}
We propose algorithms for efficient time integration of large systems of oscillatory second order ordinary differential equations (ODEs) whose solution can be expressed in terms of trigonometric matrix functions. Our algorithms are based on a residual notion for second order ODEs, which allows to extend the ``residual-time restarting'' Krylov subspace framework---which was recently introduced for exponential and $\varphi$-functions occurring in time integration of first order ODEs---to our setting. We then show that the computational cost can be further reduced in many cases by using 
our restarting 
in the Gautschi cosine scheme. We analyze residual convergence in terms of Faber and Chebyshev series and supplement these theoretical results by numerical experiments illustrating the efficiency of the proposed methods.
\end{abstract}

\begin{keywords}
  psi and sigma matrix functions, Krylov subspace methods, exponential time integration, second order ODE systems, 
  restarting
\end{keywords}

\begin{AMS}
  65F60,  
  65M20,  
  65L05   
\end{AMS}

\section{Introduction}
Exponential time integration has been a rapidly developing research
area, and exponential time solvers have been shown to be efficient in
various application classes.  These time integration schemes typically
involve a matrix function (e.g., a matrix exponential or cosine) and
possess a characteristic property of being exact for a certain 
simplified problem~\cite{HochbruckOstermann2010}.
Although development of exponential schemes started about 60~years
ago~\cite{Certaine1960,Lawson1967,Legras1966,Norsett1969,vdHouwenVerwer1974,Verwer1977},
their successful application for large scale problems was delayed for about
30~years, until numerical
linear algebra tools to evaluate the matrix functions, in particular Krylov subspace methods, 
matured (we list chronologically some of the first relevant
papers~\cite{ParkLight86,Henk:f(A),DruskinKnizhnerman1989,Knizh91,Saad1992,GallopoulosSaad1992,DruskinKnizhnerman1994,DruskinKnizhnerman1995,HochbruckLubich1997}).

Although Krylov subspace iterations can be quite successful within
exponential time integration methods, their efficient implementation
is often a challenging task, especially for large scale and stiff problems.
This is because 
conventional Krylov subspace methods, especially those based on the
Arnoldi process, become more work and memory consuming 
as the iteration number grows, which can lead to an efficiency degradation.
To avoid this and to keep the Krylov subspace dimension
restricted, a number of approaches has been developed.
First, to accelerate convergence in Krylov subspace methods,
rational shift-and-invert Krylov subspace methods can be
employed~\cite{MoretNovati2004,vdEH06}.
These methods typically exhibit a rapid and mesh-independent convergence
for parabolic problems~\cite{Grimm2012,MoretNovati2004,vdEH06} at a price of solving a linear system at
each Krylov iteration.
Another approach to keep the Krylov subspace dimension restricted is
to adjust the time step size.  In the EXPOKIT package this is done
with a sophisticated error estimation procedure~\cite{EXPOKIT}.
Furthermore, to control the Krylov subspace dimension, various restarting
approaches are designed.  These, among others, include restarting for general
matrix functions~\cite{AfanasjewEtAl2008a,EiermannErnst2006,EiermannErnstGuettel2011,FrommerGuettelSchweitzer2014a,Niehoff2006,TalEzer2007} and restarting designed specifically for the matrix exponential and related functions~\cite{BotchevGrimmHochbruck2013,CelledoniMoret1997,DruskinGreenbaumKnizhnerman1998}.

An efficient restarting strategy specifically intended for the matrix exponential
and the related $\varphi$-function was recently proposed in~\cite{BotchevKnizhnerman2020,BotchevKnizhnermanTyrtyshnikov2020}.
These are the matrix functions which are essential in the exponential
integration of first order ODE systems.
However, as far as we know, no restarting strategies have been designed so far
specifically for the cosine, sine and other related matrix functions, which are
instrumental in exponential time integration of second order ODE systems.
This is remarkable because for the matrix functions arising in numerical treatment
of second order ODEs, Krylov subspace methods usually converge much slower than for
the matrix exponential, see, e.g.,~\cite[Theorems~4, 6, 7]{DruskinKnizhnerman1989} and
our observations below.
Hence, efficient restarting techniques for these functions
would be a very welcome contribution.
More precisely, for $A=A^T\in\Rrnn$ and $t>0$,
convergence of Krylov subspace methods to compute actions of the matrix functions
$\cos(t\sqrt{A})$ and $(tA)^{-1/2}\sin(t\sqrt{A})$ is similar
to convergence of Krylov subspace methods to compute actions of
\begin{equation}
\label{skew_exp}
\exp (-t \mathcal{A}), \qquad
\mathcal{A} = 
\begin{bmatrix}
0 & -I \\ A & 0  
\end{bmatrix},
\end{equation}
where $I\in\Rrnn$ is the identity matrix. 
Since the $2\times 2$ block matrix $\mathcal{A}$ in~\eqref{skew_exp} is similar to
a skew-symmetric matrix, Krylov subspace methods for $\exp(-t \mathcal{A})$ converge 
much slower than for the matrix exponential $\exp(-tA)$ with symmetric $A$,
which occur in time integration of first order ODE systems. 
This significant difference in Krylov subspace method convergence for symmetric
versus skew-symmetric matrices is well-known, see, e.g.,~\cite[Theorems~2, 4]{HochbruckLubich1997}. 

The goal of this paper is to fill this gap by introducing an efficient restarting
technique for matrix functions occurring in second order ODEs.
The restarting approach we follow is the residual-time (RT) restarting introduced
in~\cite{BotchevKnizhnerman2020,BotchevKnizhnermanTyrtyshnikov2020}.  Although this RT~restarting appears to
work well for these matrix functions,
we show how the computational costs can further be reduced by
combining our restarting with the Gautschi cosine scheme~\cite{HochbruckLubich1999}.
More specifically,
the time dependence of the residual is employed to find a proper
time step size for the Gautschi scheme.

The rest of the paper is organized as follows. In Section~\ref{sec:residual}, we briefly discuss Krylov subspace methods for approximating the trigonometric matrix functions occurring in the time integration of second order ODEs and introduce our residual notion. Based on this residual notion, a residual-time restarting algorithm as well as a Gautschi cosine scheme with residual-based step size selection are introduced. A detailed analysis of the residual convergence in terms of Faber series (for general $A$) and Chebyshev series (for symmetric $A$) is given in Section~\ref{sec:residual_convergence}. Results of numerical experiments illustrating the performance of the proposed methods are reported in Section~\ref{sec:experiments}, followed by some concluding remarks in Section~\ref{sec:conclusions}.

Throughout this paper $\|\cdot\|$ denotes the Euclidean vector norm $\|\cdot\|_2$ or the corresponding
matrix operator norm.

\section{Residual and restarting in Krylov subspace methods for second
order ODEs}\label{sec:residual}
\subsection{Problem setting}
Let $A \in \Rrnn$ be a matrix such that
\begin{equation*}
\label{pos_real}
\Real(\vx^H\!A\vx) \geqs 0 \quad \text{for all }\vx \in \Cc^n.  
\end{equation*} 
We are interested in solving an IVP
\begin{equation}
\label{ivp}
\bm{y}''(t) = -A \bm{y}(t) + \bm{g},\quad
\bm{y}(0)=\bm{u},\quad \bm{y}'(0)=\bm{v}.
\end{equation}
Initial value problems of the form~\eqref{ivp} arise, e.g.,
from semi-discretized wave equations within the method of lines.

Let $\psi$ and $\sigma$ be entire functions defined as
\begin{equation}
\label{psi_sigma}
\psi(x^2)=2\frac{1-\cos x}{x^2}, \quad
\sigma(x^2) = \frac{\sin x}{x},
\end{equation}
where we set, by definition, $\psi(0)=1$ and $\sigma(0)=1$. It can easily be checked (cf.~Appendix~\ref{sec:appendix_new}) that the exact solution $\bm{y}(t)$ of~\eqref{ivp} 
reads, for any $t\geqs 0$,
\begin{equation}
\label{yex}
\bm{y}(t) = \bm{u} + \frac12 t^2\psi(t^2 A)(-A\bm{u}+\bm{g})
                    + t\sigma(t^2A)\bm{v}.   
\end{equation}

Note that, due to~\eqref{psi_sigma}, the matrix functions 
$$
\psi(t^2 A) = 2(t^2 A)^{-1}(I - \cos(t\sqrt{A})),\quad
\sigma(t^2 A) = (t\sqrt{A})^{-1}\sin(t\sqrt{A})
$$
are defined also for singular $A$. Furthermore, $\sqrt{A}$ can be taken here to be any
(not necessarily primary) branch of the square root of $A$; see, e.g.,~\cite[Chapter~2, Section~1]{Higham2008}.

\subsection{Krylov subspace approximations and their residuals}\label{subsec:krylov}
To use~\eqref{yex} in practical computations, two matrix functions
have to be evaluated.  In this work this is done by Krylov subspace methods
with the Arnoldi or Lanczos process in the following usual way.

Let $f(A)=\frac{t^2}2\psi(t^2 A)$, where we consider $t\geqs 0$ as a parameter, and let
$\bm{w}=-A\bm{u}+\bm{g}$.  
We compute $f(A)\bm{w}$ approximately by using the Krylov subspace
$$
\mathcal{K}_m(A,\bm{w})=
\mathrm{span} \left\{ \bm{w}, A\bm{w}, A^2\bm{w},\dots, A^{m-1}\bm{w}\right\}.
$$
We then take $\bm{v}_1=\bm{w}/\beta$, with $\beta=\|\bm{w}\|$, as the first Krylov subspace
basis vector and carry out $m$ steps of the Arnoldi
process~\cite{Saad2003,VanDerVorst2003}, building a matrix
$V_{m+1}\in\Rr^{n\times (m+1)}$ with orthonormal columns $\bm{v}_1$, \dots, $\bm{v}_{m+1}$
and an upper-Hessenberg matrix $\underline{H}_m=V_{m+1}^HAV_m\in\Rr^{(m+1)\times m}$.
Then $\mathrm{colspan} (V_m)=\mathcal{K}_m(A,\bm{w})$ and
the so-called Arnoldi decomposition holds:
\begin{equation}
\label{eq:arnoldi_decomposition}
\begin{aligned}
AV_m &= V_{m+1}\underline{H}_m,
\\
\text{or}\quad
AV_m &= V_mH_m + h_{m+1,m} \bm{v}_{m+1} \bm{e}_m^T,    
\end{aligned}
\end{equation}
where $H_m=V_m^HAV_m\in\Rr^{m\times m}$ is composed of the first $m$ rows of $\underline{H}_m$
and $\bm{e}_m=(0,\dots,0,1)^T\in \Rr^m$ denotes the $m$th canonical unit vector.
If the matrix $A$ is symmetric, instead of the Arnoldi process, 
the Lanczos process is employed and relation~\eqref{eq:arnoldi_decomposition} holds 
for a tridiagonal matrix $\underline{H}_m$.
The main computational effort for the Arnoldi process lies in $m$ matrix vector products with 
$A$---which have an overall complexity $\calO(\nnz(A)\cdot m)$ for sparse $A$ 
with $\nnz(A)$ nonzero entries---and in the orthogonalization which requires $\calO(m^2n)$ operations 
in total.

In view of~\eqref{eq:arnoldi_decomposition} and the equality $\bm{w}=V_m (\beta \bm{e}_1)$,
an approximation $\bm{y}_m$ to $\bm{y}=f(A)\bm{w}$ can be found by projecting 
onto the Krylov subspace via
\begin{equation}\label{eq:arnoldi_approximation}
\bm{y} = f(A)\bm{w} \approx   
\bm{y}_m = V_m f(V_m^HAV_m)V_m^H\bm{w}=
V_m f(H_m)(\beta \bm{e}_1).
\end{equation}
This approximation can also be derived by noticing that $\frac{t^2}2\psi(t^2A)\bm{w}$ 
is the solution of the IVP
\begin{equation*}
\label{ivp_psi}
\bm{y}''(t) = -A \bm{y}(t) + \bm{g}-A\bm{u},\quad
\bm{y}(0)=0,\quad \bm{y}'(0)=0. 
\end{equation*}
Galerkin projection of this IVP onto the same Krylov subspace then leads to
the approximation $\bm{y}_m(t)\approx\bm{y}(t)$, with 
\begin{equation}
\label{Gal_proj_psi}
\bm{y}_m(t) = V_m u(t), \qquad
\left\{\begin{aligned}
& u''(t) = -H_m u(t) + \beta \bm{e}_1,\\
& u(0)=0,\quad u'(0)=0.   
\end{aligned}\right.
\end{equation}
It is not difficult to check that relations~\eqref{eq:arnoldi_approximation}
and~\eqref{Gal_proj_psi} yield the same approximation~$\bm{y}_m(t)$
(indeed, recall that $f$ depends on the parameter $t\geqs 0$, so that
$u(t)=f(H_m)(\beta \bm{e}_1)$).

The same approach, where we set $f(A)=t\sigma(t^2A)$ and $\bm{w}=\bm{v}$,
can also be used to evaluate
the term $t\sigma(t^2A)\bm{v}$ in~\eqref{yex}.  In this case we have
\begin{equation}
\label{Gal_proj_sigma}
\bm{y}_m(t) = V_m u(t), \qquad
\left\{\begin{aligned}
& u''(t) = -H_m u(t),\\
& u(0)=0,\quad u'(0)=\beta \bm{e}_1.   
\end{aligned}\right.
\end{equation}
Note that the matrices $V_m$ and $H_m$ here differ from their counterparts in~\eqref{Gal_proj_psi}.

Evaluating \eqref{eq:arnoldi_approximation} requires computing $f(H_m)$, which 
has a computational cost of $\calO(m^3)$ for most matrix functions. Additionally, even when $A$ is symmetric, 
the complete Krylov basis $V_m$ is required for forming $\bm{y}_m$. 
For very large values of $n$, as they frequently appear in applications, the storage 
requirements for $V_m$ can therefore limit the number of iterations that can be performed. 
Additionally, when a large number $m$ of iterations is necessary for reaching the desired 
accuracy, the cost for computing $f(H_m)$ might also become significant. 

As discussed above, several different methods for \emph{restarting} the Arnoldi iteration have been 
proposed to tackle this problem.  All of these methods have in common that they allow to discard the 
Arnoldi basis vectors 
after a small, fixed number $m_{\max}$ of steps and then start a new \emph{cycle} 
of the method in which $m_{\max}$ further Arnoldi iterations are performed. For further details on restarted Krylov methods for matrix functions, we refer the reader 
to~\cite{BotchevKnizhnerman2020,BotchevKnizhnermanTyrtyshnikov2020,EiermannErnst2006,FrommerGuettelSchweitzer2014b, FrommerGuettelSchweitzer2014a,Schweitzer2016,TalEzer2007} and the references therein.

To control the accuracy of the Krylov subspace iterative approximations $\bm{y}_m$,
we introduce residuals for the matrix functions $\psi(t^2A)$ and $\sigma(t^2A)$.
To do so, we follow the approach
of~\cite{BotchevGrimmHochbruck2013,CelledoniMoret1997,DruskinGreenbaumKnizhnerman1998,EiermannErnst2006}
and view a matrix vector product with each of these matrix functions as
an exact solution of a certain IVP.  Hence, the corresponding residual
can naturally be defined with respect to this IVP.
These residuals for the $\psi$ and $\sigma$ matrix functions and, for comparison
purposes, also for the matrix exponential, are given in Table~\ref{t:res}.
It is easy to check that the residuals
of all the Krylov subspace approximations given in the table
have the form
\begin{equation}
\label{rm}
\bm{r}_m(t)=-\beta_m(t)\bm{v}_{m+1},  \quad \beta_m(t)=h_{m+1,m}\bm{e}_m^Tu(t),
\end{equation}
where $u(t)$ solves the corresponding projected IVP.
For instance, for the residual $\bm{r}_m(t)$ of the $\psi(t^2A)$ function, we
obtain, using~\eqref{eq:arnoldi_decomposition} and relation~$\bm{g}-A\bm{u}=\beta\bm{v}_1$,
\begin{align*}
  \bm{r}_m(t) &\equiv -A\bm{y}_m(t)+ \bm{g}-A\bm{u}-\bm{y}_m''(t)  =
  -AV_mu(t)+ \bm{g}-A\bm{u}-V_mu''(t) \\
  &= -(V_mH_m+ h_{m+1,m} \bm{v}_{m+1} \bm{e}_m^T)u(t)+ \bm{g}-A\bm{u}-V_m(-H_mu(t) + \beta \bm{e}_1)  \\
  &= -h_{m+1,m} \bm{v}_{m+1} \bm{e}_m^T u(t),
\end{align*}
which yields~\eqref{rm}.
Thus, for all the three matrix functions in Table~\ref{t:res} the residual is a scalar
function $\beta_m(t)$ of time $t$ multiplied with the last Krylov subspace  basis vector $\bm{v}_{m+1}$.
Hence, since the columns of $V_{m+1}$ are orthonormal,
the residual satisfies the Galerkin property
$$
V_m^H \bm{r}_m(t)=0.
$$

\begin{table}
\caption{Residuals for the matrix functions $\psi$ and $\sigma$ compared to
that of the matrix exponential.  
In case the Krylov subspace method~\eqref{eq:arnoldi_approximation}
is used, each of the residuals satisfies
relation~\eqref{rm} with $u(t)$ being the solution of the projected
IVP, as given in the table.}
\label{t:res}
\begin{center}
\renewcommand{\arraystretch}{1.2}
\scalebox{.96}{\begin{tabular}{ccc}
\hline\hline
$f(A)$ action as an   & IVP solved      & residual for $\bm{y}_m(t) = V_mu(t) \approx\bm{y}(t)$ \\
exact solution        & with $f(A)$     &  and projected IVP   \\ 
\hline
$\bm{y}(t)=\exp(-tA)\bm{v}$  
               & $\left\{\begin{aligned}
                  & \bm{y}'(t)=-A\bm{y}(t)\\
                  & \bm{y}(0)=\bm{v}
                  \end{aligned}\right.$ 
                              & $\begin{gathered}
                                 \bm{r}_m(t)\equiv -A\bm{y}_m(t)-\bm{y}_m'(t) \\[1ex] 
                                 \left\{\begin{aligned}
                                 u'(t) &=-H_mu(t) \\
                                 u(0)  &=\beta \bm{e}_1  
                                 \end{aligned}\right.
                                 \end{gathered}$ 
\\\hline
$\bm{y}(t)=t\sigma(-t^2A)\bm{v}$  
               & $\left\{\begin{aligned}
                  & \bm{y}''(t)=-A\bm{y}(t)\\
                  & \bm{y}(0)=0,\; \bm{y}'(0)=\bm{v}
                  \end{aligned}\right.$
                              & $\begin{gathered}
                                  \bm{r}_m^{(\sigma)}(t)\equiv -A\bm{y}_m(t)-\bm{y}_m''(t)\\[1ex]
                                  \left\{\begin{aligned}
                                  & u''(t)=-H_mu(t)\\
                                  & u(0)=0,\; u'(0)=\beta \bm{e}_1  
                                  \end{aligned}\right.
                                  \end{gathered}$
\\\hline
$
\begin{gathered}
\tilde{\bm{y}}(t) = \frac{t^2}2\psi(-t^2A)\tilde{\bm{g}} 
\end{gathered}$
               & $\left\{\begin{aligned}
                  & \tilde{\bm{y}}''(t)=-A\tilde{\bm{y}}(t)+\tilde{\bm{g}}\\ 
                  & \tilde{\bm{y}}(0)=0,\; \tilde{\bm{y}}'(0) = 0
                 \end{aligned}\right.$
                                 & $\begin{gathered}
                                     \tilde{\bm{r}}_m^{(\psi)}(t) \equiv - A\tilde{\bm{y}}_m(t)+\tilde{\bm{g}} 
                                     -\tilde{\bm{y}}_m''(t)
                                 \\[1ex]
                                  \left\{\begin{aligned}
                                  & u''(t)=-H_mu(t) + \beta \bm{e}_1\\
                                  & u(0)=0,\; u'(0)=0
                                  \end{aligned}\right.
                                  \end{gathered}$
\\\hline
\multicolumn{3}{l}{NB: IVP 
$\left\{\begin{aligned}
                   & \bm{y}''(t)=-A\bm{y}(t)+\bm{g}\\
                   & \bm{y}(0)=\bm{u},\; \bm{y}'(0) = 0
                  \end{aligned}\right.\quad$ can be solved as $\bm{y}(t)=\tilde{\bm{y}}(t)+\bm{u}$ with
                  $\tilde{\bm{y}}(t)$ defined}
\\
\multicolumn{3}{l}{above for $\tilde{\bm{g}}:=\bm{g}-A\bm{u}$.
We then have $\bm{r}_m^{(\psi)}(t) \equiv - A\bm{y}_m(t)+\bm{g}-\tilde{\bm{y}}_m''(t)\equiv\tilde{\bm{r}}_m^{(\psi)}(t)$.}
\\\hline
\end{tabular}}%
\end{center}
\addtolength{\tabcolsep}{-0.12em}
\end{table}

Using~\eqref{Gal_proj_psi} and~\eqref{Gal_proj_sigma},
we can compute a Krylov subspace approximation $\bm{y}_m(t)$ to the exact
solution $\bm{y}(t)$, cf.~\eqref{yex}, as a sum
of two approximate matrix function actions
\begin{equation}
\label{ym}
\begin{aligned}
\bm{y}_m(t) = \bm{u} + \bm{y}_m^{(\psi)}(t) + \bm{y}_m^{(\sigma)}(t),
\quad
\bm{y}_m^{(\psi)}(t) &\approx \frac12 t^2\psi(t^2 A)(-A\bm{u}+\bm{g}),
\\
\bm{y}_m^{(\sigma)}(t) &\approx t\sigma(t^2A)\bm{v},  
\end{aligned}
\end{equation}
where $\bm{y}_m^{(\psi)}(t)$ and $\bm{y}_m^{(\sigma)}(t)$ are the Krylov
subspace approximations defined by \eqref{Gal_proj_psi} and \eqref{Gal_proj_sigma},
respectively. For simplicity of presentation, we assume
that both approximations $\bm{y}_m^{(\psi)}(t)$ and $\bm{y}_m^{(\sigma)}(t)$
are computed with the same number of Krylov steps $m$. 
In practice each of them is computed with a certain number of Krylov steps,
such that its residual (defined in Table~\ref{t:res}) is sufficiently
small in norm. For later reference, we note that the time derivative of the 
approximate solution $\bm{y}_m(t)$ is given by 
\begin{equation}
\label{vm}
\begin{aligned}
\bm{y}_m^\prime(t) = \bm{v}_m(t) := \bm{v}_m^{(\psi)}(t) + \bm{v}_m^{(\sigma)}(t),
\quad
\bm{v}_m^{(\psi)}(t) &\approx t\sigma(t^2A)(-A\bm{u}+\bm{g}),
\\
\bm{v}_m^{(\sigma)}(t) &\approx \left(I-\frac12 t^2A\psi(t^2 A)\right)\bm{v},  
\end{aligned}
\end{equation}
where we have used the fact that
\begin{equation}\label{eq:first_derivatives_psi_sigma}
\frac{\text{d}}{\text{d} t}\ \frac12t^2\psi(t^2A) = t\sigma(t^2A)\quad
\text{ and }\quad\frac{\text{d}}{\text{d} t}\  t\sigma(t^2A) = I-\frac12t^2A\psi(t^2 A).
\end{equation}

The residual $\bm{r}_m(t)$ of the approximate solution $\bm{y}_m(t)$
in~\eqref{ym}
can also be split into the parts corresponding to $\bm{y}_m^{(\psi)}(t)$ and
$\bm{y}_m^{(\sigma)}(t)$:
\begin{equation}
\label{rm_split}
\begin{aligned}
\bm{r}_m(t) &\equiv -A\bm{y}_m(t)+ \bm{g}-\bm{y}_m''(t) =
\\
&= -A (\bm{u} + \bm{y}_m^{(\psi)}(t) + \bm{y}_m^{(\sigma)}(t)) +\bm{g}
-(\bm{y}_m^{(\psi)}(t) + \bm{y}_m^{(\sigma)}(t))''
\\
&= -A\bm{y}_m^{(\psi)}(t) +\bm{g} -A\bm{u}-(\bm{y}_m^{(\psi)}(t))'' 
   -A\bm{y}_m^{(\sigma)}(t)-(\bm{y}_m^{(\sigma)}(t))''
\\
&= \bm{r}_m^{(\psi)}(t) + \bm{r}_m^{(\sigma)}(t) ,
\end{aligned}
\end{equation}
where both the residuals $\bm{r}_m^{(\psi)}(t)$ and $\bm{r}_m^{(\sigma)}(t)$
are defined in Table~\ref{t:res} and satisfy relation~\eqref{rm}.

\subsection{Transformation to a first order ODE system}\label{subsec:firstoder}
A possible way to solve IVP~\eqref{ivp} is to introduce
\begin{equation}
\label{mtx2}  
\bm{w}(t) = \begin{bmatrix}\bm{y}(t)\\ \bm{y}'(t) \end{bmatrix},
\quad
\bm{w}_0 = \begin{bmatrix}\bm{u} \\ \bm{v} \end{bmatrix},
\quad
\widehat{\bm{g}} = \begin{bmatrix}0 \\ \bm{g} \end{bmatrix},
\qquad
\mathcal{A} = 
\begin{bmatrix}
0 & -I \\ A & 0  
\end{bmatrix}
\end{equation}
and to consider an equivalent IVP 
\begin{equation}
\label{ivp1}
\bm{w}'(t)=-\mathcal{A}\bm{w}(t) + \widehat{\bm{g}},
\quad 
\bm{w}(0)=\bm{w}_0.  
\end{equation}
This approach has a number of drawbacks, such as the necessity of working with (and storing) vectors of twice the size and the loss of possible symmetry of $A$ in $\mathcal{A}$. Another important drawback of this approach is that problem~\eqref{ivp1} is unstable
in the following sense. For an IVP of the form~\eqref{ivp1}, stability estimates with respect to perturbations in $\bm{w}_0$ and $\bm{g}$ (see, e.g.,~\cite[formula~(I.2.22)]{HundsdorferVerwer2007}) are typically obtained with the help of the constants $\omega,C\in\Rr$ such that
\begin{equation}
\label{stab}  
\|\exp(-t\mathcal{A})\|\leqs C e^{-t\omega}.
\end{equation} 
For the 2-norm (used in this paper) the last estimate
is satisfied with $C=1$ and $\omega$ being the smallest 
eigenvalue of $\mathcal{A}_H=\frac12(\mathcal{A}+\mathcal{A}^T)$.
If $\omega\geqs 0$ then estimate~\eqref{stab} is obviously much more useful
than a straightforward estimate $\|\exp(-t\mathcal{A})\|\leqs e^{t\|\mathcal{A}\|}$,
which formally shows stability, too, but leads to a large over-estimation
of the error propagation. 
However, for the matrix $\mathcal{A}_H$ defined with respect to $\mathcal{A}$ from~\eqref{mtx2}
we have that its smallest eigenvalue is 
$\omega=-\|\mathcal{A}_H\|=-\frac12\|A-I\|$.
Hence, for $A$ large in norm (a typical situation for space-discretized PDEs)
no sensible stability estimates can be obtained in this way.

\subsection{The RT restarting procedure}\label{subsec:rt_restarting}
The residual-time (RT) restarting procedure~\cite{BotchevKnizhnerman2020,BotchevKnizhnermanTyrtyshnikov2020} 
is developed for Krylov subspace methods
applied to first order ODE system, i.e., for ODE systems of the form
$\bm{y}'(t)=-A\bm{y}(t)+\bm{g}$.  In this case the residual for an approximate
solution $\bm{y}_m(t)$ is defined as
$$
\bm{r}_m(t)\equiv -A\bm{y}_m(t)+\bm{g}-\bm{y}_m'(t).
$$
The RT restarting is based on the observation
that, for any fixed $m$, $\|\bm{r}_m(t)\|$ is arbitrarily small for sufficiently small $t$.
It turns out that a similar property holds also for the residuals of the second
order equations appearing in Table~\ref{t:res}. The following result is formulated in terms of the function
$$\varphi(z) = \frac{e^z-1}{z}.$$

\begin{proposition}
\label{ut}
Let $H_m\in\Rr^{m\times m}$ be the matrix obtained by the Arnoldi or Lanczos
process, defined in~\eqref{eq:arnoldi_decomposition}, and let $u(t):\Rr\rightarrow\Rr^m$ be 
the solution of the projected IVP 
\begin{equation*}
\label{ivpH}
u''(t) = -H_m u(t) + \bm{g}, \quad u(0) = 0,\quad u'(0)=\bm{v}_0,   
\end{equation*}
with $\bm{g},\bm{v}_0\in\Rr^m$ given.  Then
\begin{equation}
\label{ut_est}
\|u(t)\|\leqs t\varphi(-t\hat{\omega}_m)\sqrt{\|\bm{v}_0\|^2 + \|\bm{g}\|^2},  
 \quad t\geqs 0,
\end{equation}
where $\hat{\omega}_m=-\frac12\|H_m-I\|$.
Thus, for any $\varepsilon>0$ there exists $\delta>0$ such that
$\|u(t)\|\leqs \varepsilon$ for $t\in[0,\delta]$.
\end{proposition}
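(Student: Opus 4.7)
The plan is to write $u(t)$ in closed form via the matrix functions $\sigma$ and $\psi$, to view it as a single linear map applied to the concatenated data vector $[\bm{v}_0;\bm{g}]$, and then to control the norm of that map by identifying it with the top block row of a matrix $\varphi$-function of an auxiliary $2m\times 2m$ first-order system.

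First, I would invoke representation~\eqref{yex} for the projected IVP (replacing $A$ by $H_m$, $\bm{u}$ by $0$, $\bm{v}$ by $\bm{v}_0$), which yields
\[
u(t) = t\sigma(t^2 H_m)\bm{v}_0 + \tfrac{t^2}{2}\psi(t^2 H_m)\bm{g}
= \bigl[\, t\sigma(t^2 H_m) \;\; \tfrac{t^2}{2}\psi(t^2 H_m)\,\bigr]
\begin{bmatrix}\bm{v}_0 \\ \bm{g}\end{bmatrix}.
\]
By submultiplicativity of the operator norm, $\|u(t)\|\leqs M(t)\sqrt{\|\bm{v}_0\|^2+\|\bm{g}\|^2}$, where $M(t)$ is the operator norm of the $m\times 2m$ block-row matrix above; it thus remains to show $M(t)\leqs t\varphi(-t\hat{\omega}_m)$.

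To this end I would introduce $\tilde{\mathcal{H}}_m:=\left[\begin{smallmatrix}0 & -I \\ H_m & 0\end{smallmatrix}\right]\in\Rr^{2m\times 2m}$, the analogue of the matrix $\mathcal{A}$ in~\eqref{mtx2} built from $H_m$. Since $\tilde{\mathcal{H}}_m^2$ is block-diagonal with blocks $-H_m$, separating even and odd powers in the exponential series yields the block form
\[
e^{-s\tilde{\mathcal{H}}_m} = \begin{bmatrix}\cos(s\sqrt{H_m}) & s\sigma(s^2 H_m) \\ -H_m\,s\sigma(s^2 H_m) & \cos(s\sqrt{H_m})\end{bmatrix},
\]
and integrating in $s$ from $0$ to $t$ using the derivative identities~\eqref{eq:first_derivatives_psi_sigma} shows that the top block row of $t\varphi(-t\tilde{\mathcal{H}}_m)=\Int_0^t e^{-s\tilde{\mathcal{H}}_m}\,ds$ is exactly $[\,t\sigma(t^2 H_m)\;\; \tfrac{t^2}{2}\psi(t^2 H_m)\,]$. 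Hence $M(t)\leqs \|t\varphi(-t\tilde{\mathcal{H}}_m)\|$, and a standard logarithmic-norm argument bounds the latter:
\[
\Bigl\|\Int_0^t e^{-s\tilde{\mathcal{H}}_m}\,ds\Bigr\| \leqs \Int_0^t \|e^{-s\tilde{\mathcal{H}}_m}\|\,ds \leqs \Int_0^t e^{-s\hat{\omega}_m}\,ds = t\varphi(-t\hat{\omega}_m),
\]
where the middle inequality uses $\|e^{-s\tilde{\mathcal{H}}_m}\|\leqs e^{-s\hat{\omega}_m}$ with $\hat{\omega}_m$ the smallest eigenvalue of $(\tilde{\mathcal{H}}_m+\tilde{\mathcal{H}}_m^T)/2$. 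As in the discussion after~\eqref{stab}, this symmetric part is the block anti-diagonal matrix $\tfrac12\left[\begin{smallmatrix}0 & (H_m-I)^T \\ H_m-I & 0\end{smallmatrix}\right]$, whose eigenvalues are $\pm\tfrac12$ times the singular values of $H_m-I$; the minimum one is $-\tfrac12\|H_m-I\|=\hat{\omega}_m$, matching the definition in the proposition. The assertion about $\delta$ then follows from continuity of the map $t\mapsto t\varphi(-t\hat{\omega}_m)$ at $t=0$, where this map vanishes.

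The main obstacle I anticipate lies in the identification step: one has to carefully verify the block structure of $e^{-s\tilde{\mathcal{H}}_m}$ and, via~\eqref{eq:first_derivatives_psi_sigma}, match the integrated off-diagonal block $\Int_0^t s\sigma(s^2 H_m)\,ds$ with $\tfrac{t^2}{2}\psi(t^2 H_m)$ so that the top block row of $t\varphi(-t\tilde{\mathcal{H}}_m)$ lands exactly on the combination dictated by the closed-form solution; without this clean identification the resulting bound would pick up an extra $\sqrt{2}$ or degenerate into separate $\|\bm{v}_0\|$ and $\|\bm{g}\|$ terms.
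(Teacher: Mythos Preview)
Your proposal is correct and follows essentially the same route as the paper: both introduce the same $2m\times 2m$ companion matrix $\mathcal{H}_m=\left[\begin{smallmatrix}0 & -I\\ H_m & 0\end{smallmatrix}\right]$, identify $u(t)$ as the top block of $t\varphi(-t\mathcal{H}_m)$ acting on $[\bm{v}_0;\bm{g}]$, and bound $\|t\varphi(-t\mathcal{H}_m)\|\leqs t\varphi(-t\hat\omega_m)$ via the smallest eigenvalue of $\tfrac12(\mathcal{H}_m+\mathcal{H}_m^T)$. The only difference is cosmetic: the paper obtains the identification in one line from the variation-of-constants formula $w(t)=\bm{w}_0+t\varphi(-t\mathcal{H}_m)(\widehat{\bm{g}}-\mathcal{H}_m\bm{w}_0)$ for the first-order system, whereas you reach the same identity by explicitly computing the block structure of $e^{-s\mathcal{H}_m}$ and integrating.
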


\begin{proof}
The last statement (that $\|u(t)\|$ is arbitrarily small for 
sufficiently small $t>0$) follows from the fact that $u(t)$
is infinitely many times differentiable,
see, e.g.,~\cite{Nefedov_eaODE}. 
To prove estimate~\eqref{ut_est}, consider
$$
w(t) = \begin{bmatrix}u(t)\\ u'(t) \end{bmatrix},
\quad
\bm{w}_0 = \begin{bmatrix}0 \\ \bm{v}_0 \end{bmatrix},
\quad
\widehat{\bm{g}} = \begin{bmatrix}0 \\ \bm{g} \end{bmatrix},
\qquad
\mathcal{H}_m = 
\begin{bmatrix}
0 & -I \\ H_m & 0  
\end{bmatrix}.
$$  
Then $w(t)$ solves an IVP
$$
w'(t) = -\mathcal{H}_m w(t) + \widehat{\bm{g}}, \quad w(0)=\bm{w}_0
$$
and, by the variation of constants formula (see, e.g., 
\cite[Chapter~I, Section~2.3]{HundsdorferVerwer2007}),
\begin{equation}
\label{wt}
w(t) = \bm{w}_0 + t\varphi(-t\mathcal{H}_m)(\widehat{\bm{g}}-\mathcal{H}_m\bm{w}_0).  
\end{equation}
Let $\hat{\omega}_m\in\Rr$ be the smallest eigenvalue of the
matrix $\frac12(\mathcal{H}_m+\mathcal{H}_m^T)$, i.e.,
$$
\Re (\bm{x}^H\mathcal{H}_m\bm{x})\geqs \hat{\omega}, \quad \forall x\in\Cc^{2m}.
$$
Note that the eigenvalues of $\frac12(\mathcal{H}_m+\mathcal{H}_m^T)$ form
pairs $\pm\sigma_i$, where $\sigma_i\geqs 0$, $i=1,\dots,m$, are the singular values
of $\frac12(H_m-I)$ (this can be shown by using the singular value decomposition 
of $H_m-I$).  
Therefore, $\hat{\omega}_m=-\frac12\|\mathcal{H}_m+\mathcal{H}_m^T\|=-\frac12\|H_m-I\|$.  
Then it holds for $t\geqs 0$
$$
t\|\varphi(-t\mathcal{H}_m)\|\leqs t\varphi(-t\hat{\omega}_m).  
$$
Taking the norm of the first $m$ vector components of both
sides in~\eqref{wt}, denoting $\gamma=\|\widehat{\bm{g}}-\mathcal{H}_m\bm{w}_0\|=\sqrt{\|\bm{v}_0\|^2 + \|\bm{g}\|^2}$,
and taking into account that $u(0)=0$, we obtain
$$
\|u(t)\|\leqs t\gamma\|\varphi(-t\mathcal{H}_m)\|\leqs t\gamma\varphi(-t\hat{\omega}_m),
$$
which yields~\eqref{ut_est}.
\end{proof}

From here on, we use upper indices ${}^{(\psi)}$ or ${}^{(\sigma)}$ to distinguish the basis 
vectors and Hessenberg matrices corresponding to the Arnoldi approximations~\eqref{Gal_proj_psi}
and~\eqref{Gal_proj_sigma}.

\begin{proposition}\label{pro:rm_est}
Let $\bm{y}_m(t)$ be the approximate Krylov subspace solution~\eqref{ym}
of IVP~\eqref{ivp} and let
$H_m^{(\psi)}\in\Rr^{m\times m}$, $h_{m+1,m}^{(\psi)}\geqs 0$ and  
$H_m^{(\sigma)}\in\Rr^{m\times m}$, $h_{m+1,m}^{(\sigma)}\geqs 0$ 
be defined by the Arnoldi decompositions~\eqref{eq:arnoldi_decomposition}
obtained for computing $\bm{y}^{(\psi)}(t)$ and $\bm{y}^{(\sigma)}(t)$,
respectively.
Then for the residual~$\bm{r}_m(t)$ of the approximate
solution~$\bm{y}_m(t)$, it holds
\begin{equation}
\label{rm_est}
\begin{aligned}
\|\bm{r}_m(t)\| &\leqs 
h_{m+1,m}^{(\psi)}t\varphi(-t\hat{\omega}^{(\psi)})\|-A\bm{u}+\bm{g}\|+
h_{m+1,m}^{(\sigma)}t\varphi(-t\hat{\omega}^{(\sigma)})\|\bm{v}\|
\\
&\leqs t\varphi(-t\hat{\omega})
\left(
h_{m+1,m}^{(\psi)}\|-A\bm{u}+\bm{g}\|+ h_{m+1,m}^{(\sigma)}\|\bm{v}\|
\right),
\end{aligned}
\end{equation}
where 
$\hat{\omega}^{(\psi)}=-\frac12\|H_m^{(\psi)}-I\|$,
$\hat{\omega}^{(\sigma)}=-\frac12\|H_m^{(\sigma)}-I\|$, and
$\hat{\omega}=\min\{ \hat{\omega}^{(\psi)}, \hat{\omega}^{(\sigma)}\}$.  
Thus, for any $\varepsilon>0$ there exists $\delta>0$ such that
$\|\bm{r}_m(t)\|\leqs \varepsilon$ for $t\in[0,\delta]$.
\end{proposition}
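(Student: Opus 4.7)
The plan is to reduce the proof to an application of Proposition~\ref{ut} together with the residual splitting~\eqref{rm_split}. The starting point is the observation that, by~\eqref{rm_split} and the triangle inequality,
\[
\|\bm{r}_m(t)\| \;\leqs\; \|\bm{r}_m^{(\psi)}(t)\| + \|\bm{r}_m^{(\sigma)}(t)\|,
\]
so it suffices to estimate each of the two summands separately.

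For each summand I would invoke the common form~\eqref{rm}. For the $\psi$-component, $\bm{r}_m^{(\psi)}(t) = -h_{m+1,m}^{(\psi)}\,\bm{e}_m^T u^{(\psi)}(t)\,\bm{v}_{m+1}^{(\psi)}$, where $u^{(\psi)}$ solves the projected IVP appearing in the third row of Table~\ref{t:res} with data $\bm{v}_0=0$ and right-hand side $\beta^{(\psi)}\bm{e}_1$, $\beta^{(\psi)}=\|-A\bm{u}+\bm{g}\|$. Since $\|\bm{v}_{m+1}^{(\psi)}\|=1$ and $|\bm{e}_m^Tu^{(\psi)}(t)|\leqs\|u^{(\psi)}(t)\|$, applying Proposition~\ref{ut} to this IVP directly yields
\[
\|\bm{r}_m^{(\psi)}(t)\| \;\leqs\; h_{m+1,m}^{(\psi)}\,t\,\varphi(-t\hat{\omega}^{(\psi)})\,\|-A\bm{u}+\bm{g}\|.
\]
An entirely analogous computation for $u^{(\sigma)}$ (second row of Table~\ref{t:res}, with $\bm{g}=0$ and $\bm{v}_0=\beta^{(\sigma)}\bm{e}_1$, $\beta^{(\sigma)}=\|\bm{v}\|$) produces the corresponding bound with $\sigma$-superscripts. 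Adding the two bounds gives the first line of~\eqref{rm_est}.

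To obtain the second (more uniform) line, I would use the monotonicity of $\varphi$ on the real line: since $\hat{\omega}\leqs\hat{\omega}^{(\psi)}$ and $\hat{\omega}\leqs\hat{\omega}^{(\sigma)}$, one has $-t\hat{\omega}\geqs -t\hat{\omega}^{(\psi)}$ and $-t\hat{\omega}\geqs -t\hat{\omega}^{(\sigma)}$ for $t\geqs 0$, whence $\varphi(-t\hat{\omega})\geqs \varphi(-t\hat{\omega}^{(\psi)})$ and likewise for the $\sigma$ term, and factoring $t\varphi(-t\hat{\omega})$ out of both summands delivers the stated bound. Finally, the continuity of $t\mapsto t\varphi(-t\hat{\omega})$ at $t=0$ together with $0\cdot\varphi(0)=0$ yields the last assertion: given $\varepsilon>0$, simply choose $\delta>0$ small enough that $t\varphi(-t\hat{\omega})\bigl(h_{m+1,m}^{(\psi)}\|-A\bm{u}+\bm{g}\|+h_{m+1,m}^{(\sigma)}\|\bm{v}\|\bigr)\leqs\varepsilon$ for all $t\in[0,\delta]$.

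I do not expect a real obstacle here; the proof is essentially bookkeeping. The only minor points to be careful about are (i) identifying the correct initial data $\bm{v}_0$ and forcing term $\bm{g}$ to be fed into Proposition~\ref{ut} for each of the $\psi$ and $\sigma$ branches (both inherit a factor $\beta$ from the Arnoldi decomposition that cancels against $\|\bm{w}\|$), and (ii) noting the direction of monotonicity of $\varphi$ when consolidating the two bounds into a single one involving $\hat{\omega}=\min\{\hat{\omega}^{(\psi)},\hat{\omega}^{(\sigma)}\}$.
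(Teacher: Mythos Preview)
Your proposal is correct and follows essentially the same approach as the paper: split the residual via~\eqref{rm_split}, bound each piece using~\eqref{rm} and Proposition~\ref{ut}, then combine with the monotonicity of $\varphi$. The paper's proof is slightly terser but structurally identical.
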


\begin{proof}
Considering the splitting~\eqref{rm_split} of the residual~$\bm{r}_m(t)$ 
into the $\psi$ and $\sigma$ parts
and the fact that both the parts satisfy~\eqref{rm} with $u(t)$ defined 
either in~\eqref{Gal_proj_psi} or in~\eqref{Gal_proj_sigma},
we obtain
\begin{align*}
\|\bm{r}_m(t)\| &\leqs 
h_{m+1,m}^{(\psi)}|\bm{e}_m^Tu^{(\psi)}(t)| + h_{m+1,m}^{(\sigma)}|\bm{e}_m^Tu^{(\sigma)}(t)|
\\
&\leqs h_{m+1,m}^{(\psi)}\|u^{(\psi)}(t)\| + h_{m+1,m}^{(\sigma)}\|u^{(\sigma)}(t)\|.
\\
\intertext{Application of the estimate~\eqref{ut_est} leads to}
\|\bm{r}_m(t)\| &\leqs
h_{m+1,m}^{(\psi)}t\varphi(-t\hat{\omega}^{(\psi)})\|-A\bm{u}+\bm{g}\|+
h_{m+1,m}^{(\sigma)}t\varphi(-t\hat{\omega}^{(\sigma)})\|\bm{v}\|
\\
&\leqs t\varphi(-t\hat{\omega})
\left(
h_{m+1,m}^{(\psi)}\|-A\bm{u}+\bm{g}\|+ h_{m+1,m}^{(\sigma)}\|\bm{v}\|
\right).
\end{align*}
Here $\hat{\omega}=\min\{ \hat{\omega}^{(\psi)}, \hat{\omega}^{(\sigma)}\}$.
\end{proof}

The estimate~\eqref{rm_est}, showing an exponential growth in the residual
norm bound, appears to be pessimistic in practice.  In fact, for symmetric positive 
definite $A$, a much better estimate can be obtained, with a linear growth in the bound.

\begin{proposition}\label{pro:rm_est2}
Let $\bm{y}_m(t)$ be the approximate Krylov subspace solution~\eqref{ym}
of IVP~\eqref{ivp}, with symmetric positive definite $A\in\Rrnn$.
Then for the residual~$\bm{r}_m(t)$ of the approximate
solution~$\bm{y}_m(t)$, it holds
\begin{equation}
\label{rm_est2}
\|\bm{r}_m(t)\| \leqs \left(\frac{h_{m+1,m}^{(\psi)}\beta^{(\psi)}}{\lambda_{\min}^{(\psi)}} + h_{m+1,m}^{(\sigma)}\beta^{(\sigma)}\right) \cdot t
\end{equation}
where $\beta^{(\psi)}=\|\bm{g}-A\bm{u}\|$, $\beta^{(\sigma)}=\|\bm{v}\|$ 
and $\lambda^{(\psi)}_{\min} > 0$ denotes the smallest eigenvalue of $H_m^{(\psi)}$. Thus, for any $\varepsilon>0$ there exists $\delta>0$ such that
$\|\bm{r}_m(t)\|\leqs \varepsilon$ for $t\in[0,\delta]$.
\end{proposition}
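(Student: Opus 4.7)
The plan is to start from the splitting $\bm{r}_m(t) = \bm{r}_m^{(\psi)}(t) + \bm{r}_m^{(\sigma)}(t)$ in~\eqref{rm_split} together with the identity $\|\bm{r}_m^{(\cdot)}(t)\| = h_{m+1,m}^{(\cdot)}|\bm{e}_m^T u^{(\cdot)}(t)|\leqs h_{m+1,m}^{(\cdot)}\|u^{(\cdot)}(t)\|$ from~\eqref{rm}, which reduces the task to bounding the two projected-IVP solutions $u^{(\psi)}(t)$ and $u^{(\sigma)}(t)$ separately. Since $A$ is symmetric positive definite and the Lanczos process produces Hessenberg matrices of the form $H_m^{(\cdot)} = (V_m^{(\cdot)})^T A V_m^{(\cdot)}$ with orthonormal $V_m^{(\cdot)}$, both $H_m^{(\psi)}$ and $H_m^{(\sigma)}$ are symmetric positive definite with spectra contained in $[\lambda_{\min}^{(\cdot)}, +\infty)$, so spectral calculus on SPD matrices is fully available.

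For the $\sigma$ contribution, the projected IVP~\eqref{Gal_proj_sigma} admits the closed-form solution $u^{(\sigma)}(t) = t\sigma(t^2 H_m^{(\sigma)})\beta^{(\sigma)}\bm{e}_1$. Since $|\sin(x)/x|\leqs 1$ on the real axis, spectral calculus gives $\|\sigma(t^2 H_m^{(\sigma)})\|\leqs 1$, hence $\|u^{(\sigma)}(t)\|\leqs t\beta^{(\sigma)}$, which reproduces the $\sigma$-term in~\eqref{rm_est2}.

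The $\psi$ contribution is the technical core. From~\eqref{Gal_proj_psi} one has
\begin{equation*}
u^{(\psi)}(t) = \tfrac{t^2}{2}\psi(t^2 H_m^{(\psi)})\beta^{(\psi)}\bm{e}_1 = (H_m^{(\psi)})^{-1}\!\bigl(I-\cos(t\sqrt{H_m^{(\psi)}})\bigr)\beta^{(\psi)}\bm{e}_1,
\end{equation*}
so the task reduces to estimating $\sup_{\lambda\geqs \lambda_{\min}^{(\psi)}}(1-\cos(t\sqrt\lambda))/\lambda$. I would combine the two elementary scalar bounds $1-\cos\theta\leqs \theta^2/2$ and $1-\cos\theta\leqs 2$ via spectral calculus to obtain
\begin{equation*}
\|u^{(\psi)}(t)\|\leqs \beta^{(\psi)}\min\!\Bigl(\tfrac{t^2}{2},\,\tfrac{2}{\lambda_{\min}^{(\psi)}}\Bigr),
\end{equation*}
and then argue that this minimum is dominated by $t/\lambda_{\min}^{(\psi)}$: for $t\lambda_{\min}^{(\psi)}\leqs 2$ the quadratic branch satisfies $t^2/2\leqs t/\lambda_{\min}^{(\psi)}$, while for $t\geqs 2$ the constant branch gives $2/\lambda_{\min}^{(\psi)}\leqs t/\lambda_{\min}^{(\psi)}$. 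Weighting by the corresponding $h_{m+1,m}^{(\cdot)}$ and adding the two contributions produces~\eqref{rm_est2}.

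The main obstacle is this last $\psi$-estimate: $u^{(\psi)}(t)$ behaves quadratically in $t$ near the origin (because $\psi(0)=1$) and is only $\calO(1)$ at infinity, so one has to balance these two scaling regimes in order to extract a uniform linear-in-$t$ bound with the stated constant $1/\lambda_{\min}^{(\psi)}$. The $\sigma$ part, by contrast, is immediate from $|\sin(x)/x|\leqs 1$. Once~\eqref{rm_est2} is established, the concluding $\varepsilon$--$\delta$ statement is a direct consequence of the linear vanishing of its right-hand side at $t=0$.
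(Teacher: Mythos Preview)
Your overall structure---splitting the residual into $\psi$ and $\sigma$ parts via~\eqref{rm_split}, bounding each $|\bm{e}_m^Tu^{(\cdot)}(t)|$ by $\|u^{(\cdot)}(t)\|$, and using SPD spectral calculus on $H_m^{(\cdot)}$---is exactly the paper's approach, and your treatment of the $\sigma$ part is correct and identical to the paper's.

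The $\psi$ part, however, has a genuine gap. Your two cases $t\lambda_{\min}^{(\psi)}\leqs 2$ and $t\geqs 2$ do \emph{not} exhaust $[0,\infty)$ unless $\lambda_{\min}^{(\psi)}\leqs 1$: when $\lambda_{\min}^{(\psi)}>1$ the interval $t\in(2/\lambda_{\min}^{(\psi)},\,2)$ is uncovered, and there your claimed inequality $\min\bigl(\tfrac{t^2}{2},\,\tfrac{2}{\lambda_{\min}^{(\psi)}}\bigr)\leqs t/\lambda_{\min}^{(\psi)}$ is in fact \emph{false}. For instance with $\lambda_{\min}^{(\psi)}=4$ and $t=1$ the left side equals $\min(1/2,1/2)=1/2$ while the right side equals $1/4$. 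The paper avoids any case split by invoking a \emph{third} scalar inequality, $|1-\cos x|\leqs x$ for $x\geqs 0$, which gives directly
\[
\frac{|1-\cos(t\sqrt{\lambda})|}{\lambda}\ \leqs\ \frac{t\sqrt{\lambda}}{\lambda}=\frac{t}{\sqrt{\lambda}}\ \leqs\ \frac{t}{\sqrt{\lambda_{\min}^{(\psi)}}},
\]
and it is this bound that supplies the middle term in the paper's tighter estimate~\eqref{eq:tighter_residual_bound}. (Observe that this actually yields $\sqrt{\lambda_{\min}^{(\psi)}}$ in the denominator; the printed bound~\eqref{rm_est2} with $\lambda_{\min}^{(\psi)}$ appears to carry a typo. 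Incidentally, your two-branch argument \emph{does} prove the corrected inequality $\min\bigl(\tfrac{t^2}{2},\tfrac{2}{\lambda_{\min}^{(\psi)}}\bigr)\leqs t/\sqrt{\lambda_{\min}^{(\psi)}}$ if you split once at $t\sqrt{\lambda_{\min}^{(\psi)}}=2$, since then $t^2/2\leqs t/\sqrt{\lambda_{\min}^{(\psi)}}$ on one side and $2/\lambda_{\min}^{(\psi)}\leqs t/\sqrt{\lambda_{\min}^{(\psi)}}$ on the other.)
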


\begin{proof}
The solutions $u^{(\psi)}(t)$ of IVP~\eqref{Gal_proj_psi} and
$u^{(\sigma)}(t)$ of IVP~\eqref{Gal_proj_sigma}
can be expressed as
$$
u^{(\psi)}(t)=\frac{t^2}2\psi(-t^2H_m^{(\psi)})\beta^{(\psi)} \bm{e}_1,
\quad
u^{(\sigma)}(t)=t\sigma(-t^2H_m^{(\sigma)})\beta^{(\sigma)} \bm{e}_1.
$$
Denote by $\lambda_i^{(\psi)}, \lambda_i^{(\sigma)}$, $i=1,\dots,m$ the 
eigenvalues of $H_m^{(\psi)}$ and $H_m^{(\sigma)}$, respectively. 
Clearly $\lambda_i^{(\psi)}, \lambda_i^{(\sigma)}$ lie in the spectral 
interval of $A$ and thus, in particular, $\lambda_i^{(\psi)}, 
\lambda_i^{(\sigma)} \geq \lambda_{\min} > 0$. Using the 
standard estimates $|\sin x| \leq x$ and $|1-\cos x| \leq x$ 
for all $x \geq 0$, we therefore have, for $t\geqs 0$,
$$
\begin{gathered}
\frac{t^2}2\|\psi(t^2H_m^{(\psi)})\|\leqs 
\frac{t^2}2 \max_{i=1,\dots,m}|\psi(-t^2\lambda_i^{(\psi)})|
\leqs \min\left\{ \frac{t^2}2, \frac{t}{\lambda_{\min}^{(\psi)}}, \frac{2}{\lambda_{\min}^{(\psi)}} 
\right\}, 
\\
t\|\sigma(t^2H_m^{(\sigma)})\|\leqs t \max_{i=1,\dots,m}|\sigma(t^2\lambda_i^{(\sigma)})|
\leqs\min\left\{ t, \frac{1}{\sqrt{\lambda_{\min}^{(\sigma)}}} \right\}.
\end{gathered}
$$
From the above estimates, we obtain
\begin{align}
\|\bm{r}_m(t)\| &\leqs 
h_{m+1,m}^{(\psi)}|\bm{e}_m^Tu^{(\psi)}(t)| + h_{m+1,m}^{(\sigma)}|\bm{e}_m^Tu^{(\sigma)}(t)|\nonumber
\\
&\leqs h_{m+1,m}^{(\psi)}\|u^{(\psi)}(t)\| + h_{m+1,m}^{(\sigma)}\|u^{(\sigma)}(t)\|\nonumber
\\
&\leqs h_{m+1,m}^{(\psi)}\beta^{(\psi)} \min\left\{ \frac{t^2}2, \frac{t}{\lambda_{\min}^{(\psi)}}, \frac{2}{\lambda_{\min}^{(\psi)}} 
\right\} + h_{m+1,m}^{(\sigma)}\beta^{(\sigma)}\min\left\{ t, \frac{1}{\sqrt{\lambda_{\min}^{(\sigma)}}} \right\},\label{eq:tighter_residual_bound}
\end{align}
from which the assertion follows.
\end{proof}

\begin{remark}\label{rem:tighter_bound}
For better readability, we state the simplified bound~\eqref{rm_est2} in the
assertion of Proposition~\ref{pro:rm_est2}, while the bound~\eqref{eq:tighter_residual_bound}
appearing in the proof is actually tighter and better captures the actual behavior
with respect to $t$. In particular~\eqref{eq:tighter_residual_bound} shows that 
the residual norm is bounded from above and cannot increase to arbitrarily large values.
\end{remark}

Based on the observations stated in Propositions~\ref{pro:rm_est} and~\ref{pro:rm_est2}, 
we now formulate  RT restarting algorithms for IVP~\eqref{ivp} in Algorithms~\ref{alg:second_order_RT_sim} and \ref{alg:second_order_RT_seq}. The algorithms take $A, \bm{u}, \bm{g}$ and $\bm{v}$ as inputs, as well as a final time $t$ for which we want to compute the solution of~\eqref{ivp}, a maximum Krylov subspace dimension $m_{\max}$ and a tolerance $\tol$. We now give details concerning important parts of the algorithms.

\begin{algorithm}
\caption{\label{alg:second_order_RT_sim}RT restarting for second-order IVPs (simultaneous version)}
\begin{algorithmic}[1]\onehalfspacing
\Statex \textbf{Input:} $A \in \mathbb{R}^{n \times n}, \bm{u}, \bm{g},  \bm{v} \in \mathbb{R}^n$, $t > 0$, $m_{\max}$, $\tol$
\Statex \textbf{Output:} Approximate solution $\bm{y}$ of the IVP at time $t$
\State Set \texttt{converged} $\leftarrow$ \texttt{false},\qquad $\bm{y} \leftarrow \bm{u}$
\While{not \texttt{converged}}
\State Set $\widetilde{\bm{g}} \leftarrow -A\bm{y}+\bm{g}$,\qquad $\beta^{(\psi)} \leftarrow \|\widetilde{\bm{g}}\|$,\qquad $\bm{v}_1^{(\psi)} \leftarrow \widetilde{\bm{g}}/\beta^{(\psi)}$
\State Set $\beta^{(\sigma)} \leftarrow \|\bm{v}\|$, \qquad $\bm{v}_1^{(\sigma)} \leftarrow \bm{v}/\beta^{(\sigma)}$
\For{$m=1,\dots,m_{\max}$}
    \State Compute next basis vectors $\bm{v}^{(\psi)}_{m+1}, \bm{v}^{(\sigma)}_{m+1}$ and next columns of $H^{(\psi)}_m, H^{(\sigma)}_m$ \Statex \quad\quad\quad\quad by $m$th step of the Arnoldi/Lanczos process.
    \If{$\max_{s\in[0,t]} \|\bm{r}_m(s)\|\leqs \tol \cdot (\|\bm{g}-A\bm{u}\| +\|\bm{v}\|)$}\label{line:rescheck_sim1}
        \State Set \texttt{converged} $\leftarrow$ \texttt{true},\qquad $\delta \leftarrow t$
        \State break\Comment{leave the \textbf{for}-loop}
    \ElsIf{$m = m_{\max}$}
        \State Find largest $\delta$ such that $\max_{s\in[0,\delta]}\|\bm{r}_m(s)\| \leq \tol$\label{line:rescheck_sim2}
        \EndIf
\EndFor
\State Set $\bm{y}^{(\psi)} \leftarrow \frac12\beta^{(\psi)}\delta^2V_m^{(\psi)}\psi(\delta^2H_m^{(\psi)})\bm{e}_1$, $\bm{v}^{(\psi)} \leftarrow \beta^{(\psi)}\delta V_m^{(\psi)}\sigma(\delta^2H_m^{(\psi)})\bm{e}_1$
\State Set $\bm{y}^{(\sigma)} \leftarrow \beta^{(\sigma)}V_m^{(\sigma)}\sigma(\delta^2 H_m^{(\sigma)})\bm{e}_1$, \qquad $\bm{v}^{(\sigma)} \leftarrow \beta^{(\sigma)} V_m^{(\sigma)}(I-\frac12\delta^2 \psi(\delta^2 H_m^{(\sigma)}))\bm{e}_1$
\State Set $\bm{y} \leftarrow \bm{y} + \bm{y}^{(\psi)} + \bm{y}^{(\sigma)}$, \qquad $\bm{v} \leftarrow \bm{v} + \bm{v}^{(\psi)} + \bm{v}^{(\sigma)}$
\State $t \leftarrow t - \delta$

\EndWhile

\end{algorithmic}
\end{algorithm}

\paragraph{Simultaneous vs.\ sequential solution of two underlying IVPs}
Algorithm~\ref{alg:second_order_RT_sim} is a straightforward adaptation of the RT restarting procedure~\cite{BotchevKnizhnerman2020,BotchevKnizhnermanTyrtyshnikov2020} to the computation of~\eqref{ym}, where we choose a time step $\delta$ based on the overall residual $\bm{r}_m(s)$ of the IVP~\eqref{ivp}. Doing so requires computing approximations for the actions of the $\psi$ and $\sigma$ functions in~\eqref{ym}--\eqref{vm} simultaneously. In particular, both Krylov bases $V_m^{(\psi)}$ and $V_m^{(\sigma)}$ have to be kept in memory at the same time. Thus, in a limited memory setting, in which only $m_{\max}$ Krylov vectors can be stored at the same time, this amounts to reducing the number of Krylov steps to $m_{\max}/2$. As shorter restart lengths typically lead to a higher overall number of matrix vector products, this reduction is clearly undesirable.

We therefore also take a different approach here, depicted in Algorithm~\ref{alg:second_order_RT_seq}. In this approach, we compute the bases $V_m^{(\psi)}$ and $V_m^{(\sigma)}$ sequentially. After choosing an appropriate step size $\delta$ based on $\|\bm{r}_m^{(\psi)}(s)\|$, we compute the corresponding matrix function actions $\bm{y}^{(\psi)}(\delta), \bm{v}^{(\psi)}(\delta)$, allowing us to then discard $V_m^{(\psi)}$ after line~\ref{line:discard_after_here} before starting the Krylov iteration that generates $V_m^{(\sigma)}$. This sequential approach has the drawback that it might happen that the step size $\delta$ is too large for the second iteration, so that $\max_{s\in[0,\delta]}\|\bm{r}_{m_{\max}}^{(\sigma)}(s)\| > \tol^{(\sigma)}$. In that case, we need to reduce the time step accordingly to $\delta^\ast < \delta$. As $V_m^{(\psi)}$ has already been discarded from memory at this point, we then need to recompute it in order to form the approximations $\bm{y}^{(\psi)}(\delta^\ast), \bm{v}^{(\psi)}(\delta^\ast)$, leading to additional matrix vector products. 
However, in our experience, 
it is almost always the case that $\delta^*\geqslant\delta$, so that the benefit of 
not storing the two bases simultaneously definitely outweighs the need of occasionally recomputing some basis vectors; cf.\ also the numerical experiments reported in Section~\ref{sec:experiments}.

\begin{remark}
A natural alternative to the presented simultaneous RT restarted Algorithm~\ref{alg:second_order_RT_sim}
is to compute the two involved matrix functions employing a single block Krylov subspace.  This can be done, for instance, within the framework of the exponential
block Krylov (EBK) methods, see~\cite[Section~3.3]{Botchev2013}.
Our experiments show that in general, when the vectors on which the matrix functions are evaluated are far from being linearly dependent, convergence behavior of this block Krylov subspace method
is very similar to that of the simultaneous RT restarted Algorithm~\ref{alg:second_order_RT_sim}.
Since, as we will see below, the simultaneous Algorithm~\ref{alg:second_order_RT_sim} turns out to be the least efficient among the methods discussed and tested in this paper,
we do not pay further attention to the block method.
\end{remark}

\paragraph{Stopping criterion}
In practice it is often natural to require that the residual is small
relative to the norm of the initial Krylov subspace vector.  
In our case there are two Krylov subspaces involved (cf.~\eqref{ym}), with the starting vectors
$\bm{g}-A\bm{u}$ and $\bm{v}$, respectively.  Therefore, it is natural
to require that the residual~\eqref{rm_split} of the 
approximation~$\bm{y}_m(t)$ in~\eqref{ym} satisfies
\begin{equation}
\label{stop}
\|\bm{r}_m(t)\|\leqs \tol \cdot (\|\bm{g}-A\bm{u}\| +\|\bm{v}\|).
\end{equation}
In the sequential version of RT restarting, Algorithm~\ref{alg:second_order_RT_seq}, we do not have the overall IVP residual $\bm{r}_m(t)$ available and instead have to work with $\bm{r}_m^{(\psi)}(t),  \bm{r}_m^{(\sigma)}(t)$ individually. Taking into account that $\bm{r}_m(t)=\bm{r}_m^{(\psi)}(t) + \bm{r}_m^{(\sigma)}(t)$, condition~\eqref{stop} holds
\begin{algorithm}[H]
\caption{\label{alg:second_order_RT_seq}RT restarting for second-order IVPs (sequential version)}
\begin{algorithmic}[1]\onehalfspacing

\Statex \textbf{Input:} $A \in \mathbb{R}^{n \times n}, \bm{u}, \bm{g},  \bm{v} \in \mathbb{R}^n$, $t > 0$, $m_{\max}$, $\tol$
\Statex \textbf{Output:} Approximate solution $\bm{y}$ of the IVP at time $t$
\State Set \texttt{converged} $\leftarrow$ \texttt{false},\qquad $\bm{y} \leftarrow \bm{u}$
\State Set $\tol^{(\psi)} \leftarrow \frac12 \tol \left(1 +\frac{\|\bm{v}\|}{\|\bm{g}-A\bm{u}\|}\right)$,\qquad $\tol^{(\sigma)} \leftarrow \frac12 \tol\left(1 + \frac{\|\bm{g}-A\bm{u}\|}{\|\bm{v}\|}\right).$
\While{not \texttt{converged}}
\State Set $\widetilde{\bm{g}} \leftarrow -A\bm{y}+\bm{g}$,\qquad $\beta^{(\psi)} \leftarrow \|\widetilde{\bm{g}}\|$,\qquad $\bm{v}_1^{(\psi)} \leftarrow \widetilde{\bm{g}}/\beta^{(\psi)}$
\For{$m=1,\dots,m_{\max}$}
    \State Compute next basis vector $\bm{v}^{(\psi)}_{m+1}$ and next column of $H^{(\psi)}_m$ by $m$th step of \Statex \quad\quad\quad\quad the Arnoldi/Lanczos process.
    \If{$\max_{s\in[0,t]} \|\bm{r}_m^{(\psi)}(s)\| \leq \tol^{(\psi)}$}\label{line:rescheck_psi}
        \State Set \texttt{converged} $\leftarrow$ \texttt{true},\qquad $\delta \leftarrow t$
        \State break\Comment{leave the \textbf{for}-loop}
    \ElsIf{$m = m_{\max}$}
        \State Find largest $\delta$ such that $\max_{s\in[0,\delta]}\|\bm{r}_m^{(\psi)}(s)\| \leq \tol^{(\psi)}$\label{line:rescheck2_psi}
        \EndIf
\EndFor
\State Set $\bm{y}^{(\psi)} \leftarrow \frac12\beta^{(\psi)}\delta^2V_m^{(\psi)}\psi(\delta^2H_m^{(\psi)})\bm{e}_1$, \qquad $\bm{v}^{(\psi)} \leftarrow \beta^{(\psi)}\delta V_m^{(\psi)}\sigma(\delta^2H_m^{(\psi)})\bm{e}_1$\label{line:discard_after_here}

\Statex\Comment{At this point, $V_m^{(\psi)}$ can be discarded}
\State Set $\beta^{(\sigma)} \leftarrow \|\bm{v}\|$, \qquad $\bm{v}_1^{(\sigma)} \leftarrow \bm{v}/\beta^{(\sigma)}$
\For{$m=1,\dots,m_{\max}$}
    \State Compute next basis vector $\bm{v}^{(\sigma)}_{m+1}$ and next column of $H^{(\sigma)}_m$ by $m$th step of \Statex \quad\quad\quad\quad the Arnoldi/Lanczos process.
    \If{$\max_{s\in[0,\delta]} \|\bm{r}_m^{(\sigma)}(s)\| \leq \tol^{(\sigma)}$}\label{line:rescheck_sigma}
            \State break\Comment{leave the \textbf{for}-loop}
    \ElsIf{$m = m_{\max}$}
        \State Find largest $\delta^\ast$ such that $\max_{s\in[0,\delta^\ast]}\|\bm{r}_m^{(\sigma)}(s)\| \leq \tol^{\sigma}$\label{line:rescheck2_sigma}
        \State Recompute $\bm{y}^{(\psi)}, \bm{v}^{(\psi)}$ for time $\delta^\ast$\label{line:recompute} \Comment{See text for details}
        \State Set $\delta \leftarrow \delta^\ast$
    \EndIf
\EndFor
\State Set $\bm{y}^{(\sigma)} \leftarrow \beta^{(\sigma)}V_m^{(\sigma)}\sigma(\delta^2 H_m^{(\sigma)})\bm{e}_1$, \qquad $\bm{v}^{(\sigma)} \leftarrow \beta^{(\sigma)} V_m^{(\sigma)}(I-\frac12\delta^2 \psi(\delta^2 H_m^{(\sigma)}))\bm{e}_1$
\Statex\Comment{Put together solution for current time step}
\State Set $\bm{y} \leftarrow \bm{y} + \bm{y}^{(\psi)} + \bm{y}^{(\sigma)}$, \qquad $\bm{v} \leftarrow \bm{v} + \bm{v}^{(\psi)} + \bm{v}^{(\sigma)}$\label{line:update_yv}
\State $t \leftarrow t - \delta$

\EndWhile

\end{algorithmic}
\end{algorithm}%
\noindent provided 
\begin{align*}
\|\bm{r}_m^{(\psi)}(t)\| &\leqs \tol^{(\psi)} \|\bm{g}-A\bm{u}\|
\leqs \frac12 \tol \cdot (\|\bm{g}-A\bm{u}\| +\|\bm{v}\|), \quad\text{and}
\\
\|\bm{r}_m^{(\sigma)}(t)\| &\leqs \tol^{(\sigma)} \|\bm{v}\|
\leqs \frac12 \tol \cdot (\|\bm{g}-A\bm{u}\| +\|\bm{v}\|),
\end{align*}
with $\tol^{(\psi)}$ and $\tol^{(\sigma)}$ being the tolerances used to 
compute $\bm{y}_m^{(\psi)}(t)$ and $\bm{y}_m^{(\sigma)}(t)$, respectively. 
Hence, \eqref{stop} is satisfied if we set
\begin{equation}
\label{tols}
\tol^{(\psi)}:= \frac12 \tol \cdot \left(1 +\frac{\|\bm{v}\|}{\|\bm{g}-A\bm{u}\|}\right),
\quad
\tol^{(\sigma)} := \frac12 \tol \cdot \left(1 + \frac{\|\bm{g}-A\bm{u}\|}{\|\bm{v}\|}\right).
\end{equation}
Thus, in practice for a given tolerance $\tol$ the Krylov subspace 
approximations $\bm{y}_m^{(\psi)}(t)$ and $\bm{y}_m^{(\sigma)}(t)$ should be
computed with tolerances given by~\eqref{tols}.
This means that in case $\|\bm{v}\|/\|\bm{g}-A\bm{u}\| \gg 1$ or $\|\bm{v}\|/\|\bm{g}-A\bm{u}\| \ll 1$
either of the two tolerances gets a relaxed value compared to $\tol$, providing a reduction in computational costs.

\paragraph{Step size selection}
In order to monitor the residual norm in lines~\ref{line:rescheck_sim1} and~\ref{line:rescheck_sim2} of Algorithm~\ref{alg:second_order_RT_sim} and lines~\ref{line:rescheck_psi}, \ref{line:rescheck2_psi}, \ref{line:rescheck_sigma} and~\ref{line:rescheck2_sigma} of Algorithm~\ref{alg:second_order_RT_seq}, we follow 
closely the strategy outlined in~\cite{BotchevKnizhnerman2020,BotchevKnizhnermanTyrtyshnikov2020} which has 
proven to be effective in practice. In line~\ref{line:rescheck_sim1} of Algorithm~\ref{alg:second_order_RT_sim} and lines~\ref{line:rescheck_psi} or~\ref{line:rescheck_sigma} of Algorithm~\ref{alg:second_order_RT_seq}, we compute the values 
$\|\bm{r}_m^{(\psi|\sigma)}(s)\|$ for 
$s\in \{\frac{t}{6}, \frac{t}{3}, \frac{t}{2}, \frac{2t}{3}, \frac{5t}{6}, t\}$ which are readily available 
after solving the corresponding projected IVPs of dimension $m$ (cf.\ Table~\ref{t:res}) and check whether they all lie below $\tol^{(\psi|\sigma)}$.

For determining the value of $\delta$ in line~\ref{line:rescheck_sim2} of Algorithm~\ref{alg:second_order_RT_sim} and line~\ref{line:rescheck2_psi} or~\ref{line:rescheck2_sigma} of Algorithm~\ref{alg:second_order_RT_seq}, we use a finer resolution: 
We trace the residual norms $\|\bm{r}_m^{(\psi|\sigma)}(s)\|$ for $s \in \{\Delta t, 2\Delta t, 3\Delta t, \dots \}$
with
$$\Delta t := \frac{t}{2^k \cdot 100},$$
where $k \in \mathbb{N}_0$ is chosen as small as possible such that 
$\|\bm{r}_m^{(\psi|\sigma)}(\Delta t)\| \leq \tol^{(\psi|\sigma)}$. Precisely, we start by 
computing $\|\bm{r}_m^{(\psi|\sigma)}(\frac{t}{100})\|$ and check whether this 
value lies below the required tolerance. If it exceeds the tolerance, we iteratively halve it until we 
find a $\Delta t$ for which the residual satisfies the tolerance. We then successively compute the residual 
norms for $s = 2\Delta t, 3\Delta t, 4\Delta t, \dots$ until we find the first value of $s$ for which the 
tolerance is exceeded.

\subsection{The Gautschi cosine scheme}\label{subsec:gautschi}

We now outline how we can potentially reduce the number of matrix vector products compared to application of Algorithms~\ref{alg:second_order_RT_sim} or~\ref{alg:second_order_RT_seq} by employing the 
Gautschi cosine scheme (cf.~\cite{Gautschi1961,HochbruckLubich1999,HochbruckLubichSelhofer1998,Gautschi2006}) for solving~\eqref{ivp}.

This scheme is based on the observation that for any time step $\delta$, the solution of~\eqref{ivp} satisfies
\begin{equation}\label{eq:gautschi1}
\bm{y}(t+\delta) - 2\bm{y}(t) + \bm{y}(t-\delta) = \delta^2\psi(\delta^2A)(-A\bm{y}(t)+\bm{g}),
\end{equation}
which follows from the variation-of-constants formula. Fixing a step size $\delta$ and denoting $\bm{y}_k = \bm{y}(k\delta)$, 
equation~\eqref{eq:gautschi1} turns into the time stepping scheme
\begin{equation}\label{eq:gautschi2}
\bm{y}_{k+1} = \delta^2\psi(\delta^2A)(-A\bm{y}_k+\bm{g}) + 2\bm{y}_k - \bm{y}_{k-1}
\end{equation}
with $\bm{y}_0 = \bm{u}$ and $\bm{y}_1=\bm{y}(\delta)$ computed
by~\eqref{yex}.
The iteration~\eqref{eq:gautschi2} can be further rewritten to obtain a one-step version, cf.~\cite[Equation~(10)]{HochbruckLubich1999},
\begin{eqnarray}
\bm{v}_{k+1/2} &=& \bm{v}_k + \frac12\delta\psi(\delta^2A)(-A\bm{y}_k + \bm{g}),\nonumber\\
\bm{y}_{k+1}   &=& \bm{y}_k + \delta\bm{v}_{k+1/2},\label{eq:gautschi_onestep}\\
\bm{v}_{k+1}   &=& \bm{v}_{k+1/2} + \frac12\delta \psi(\delta^2 A)(-A\bm{y}_{k+1} + \bm{g}),\nonumber
\end{eqnarray}
with starting vectors $\bm{y}_0 = \bm{u}$ and 
\begin{equation}\label{eq:gautschi_starting_vector}
\bm{v}_0 = \sigma(\delta^2 A)\bm{v}.
\end{equation}

It is important to note that in each iteration 
of~\eqref{eq:gautschi_onestep} (except the first)
the action of the $\psi$ function required to compute $\bm{v}_{k+1/2}$
has already been computed in the previous iteration. Also note that 
the values $\bm{v}_k$ computed in~\eqref{eq:gautschi_onestep} do 
\emph{not} correspond to the values $\bm{y}^\prime(\delta k)$, but 
can rather be interpreted as \emph{averaged} velocities over the 
interval $[(k-1)\delta, (k+1)\delta]$.

Of course, in practice the matrix functions appearing in~\eqref{eq:gautschi_onestep} 
and~\eqref{eq:gautschi_starting_vector} are not computed exactly, but are replaced by
their Galerkin approximations~\eqref{Gal_proj_psi} and~\eqref{Gal_proj_sigma}, respectively.
Then, assuming that Algorithm~\ref{alg:second_order_RT_seq} chooses the same step size $\delta$ as used
in the Gautschi scheme, the number of matrix vector products needed for the 
Gautschi scheme is roughly half of the number of matrix vector products needed for computing
the solution via~\eqref{ym}: The number of $\psi$ function evaluations is the same, 
but $\sigma$ needs to be approximated only once in the Gautschi scheme for computing 
the starting vector $\bm{v}_0$; see~\eqref{eq:gautschi_starting_vector}.

\begin{algorithm}
\caption{\label{alg:gautschi}Gautschi cosine scheme with residual-based step size selection}
\begin{algorithmic}[1]\onehalfspacing
\Statex \textbf{Input:} $A \in \mathbb{R}^{n \times n}, \bm{u}, \bm{v}, \bm{g} \in \mathbb{R}^n$, $t > 0$, $m$, $\tol$, safety factor $0 < \alpha < 1$
\Statex \textbf{Output:} Approximate solution $\bm{y}$ of the IVP at time $t$
\Statex \Comment{Initial computation of $\bm{v}_0$ with step size selection}
\State Set $\widetilde{m} \leftarrow \lfloor\alpha\cdot m\rfloor$
\State Perform $\widetilde{m}$ Arnoldi/Lanczos steps for $A$ and $\bm{v}$, yielding $\beta_m^{(\sigma)}, V_m^{(\sigma)}, H_m^{(\sigma)}$
\State Find largest $\delta$ such that $\max_{s\in[0,\delta]}\|\bm{r}_m^{(\sigma)}(s)\| \leq \tol$
\State Set $\bm{v}_0 \leftarrow \beta_m^{(\sigma)}V_m^{(\sigma)}\sigma(\delta^2 H_m^{(\sigma)})\bm{e}_1$
\State Perform $\widetilde{m}$ Arnoldi/Lanczos steps for $A$ and $(-A\bm{u}+\bm{g})$, yielding $\beta_m^{(\psi)}, V_m^{(\psi)}, H_m^{(\psi)}$
\If{$\|\bm{r}_m^{(\psi)}(\delta)\| > \tol$} \Comment{Update step size and recompute $\sigma$ action}
\State Find largest $\delta^\ast$ such that $\max_{s\in[0,\delta^\ast]}\|\bm{r}_m^{(\psi)}(s)\| \leq \tol$
\State Perform $\widetilde{m}$ Arnoldi/Lanczos steps for $A$ and $\bm{v}$, yielding $\beta_m^{(\sigma)}, V_m^{(\sigma)}, H_m^{(\sigma)}$ 
\Statex\Comment{only if $V_m^{(\sigma)}$ was discarded from memory}
\State $\bm{v}_0 \leftarrow \beta_m^{(\sigma)}V_m^{(\sigma)}\sigma((\delta^\ast)^2 H_m^{(\sigma)})\bm{e}_1$
\State Set $\delta \leftarrow \delta^\ast$
\EndIf
\State \texttt{steps} $\leftarrow \lceil\frac{t}{\delta}\rceil$
\State $\bm{x} \leftarrow \frac12\beta_m^{(\psi)}\delta V_m^{(\psi)}\psi(\delta^2 H_m^{(\psi)})\bm{e}_1$
\Statex \Comment{Actual time stepping}
\For{$k=0,\dots,\texttt{steps-1}$}
\State $\bm{v}_{k+1/2} \leftarrow \bm{v}_k + \bm{x}$
\State $\bm{y}_{k+1} \leftarrow \bm{y}_k + \delta \bm{v}_{k+1/2}$
\State Perform $m$ Arnoldi/Lanczos steps for $A$ and $-A\bm{y}_{k+1}+\bm{g}$, yielding $\beta_m, V_m, H_m$
\State Find largest $\tilde{\delta} \in [0,\delta]$ such that $\max_{s\in[0,\delta^\ast]}\|\bm{r}_m^{(\psi)}(s)\| \leq \tol$
\If{$\tilde{\delta} = \delta$}
\Comment{No time step repair necessary}
\State $\bm{x} \leftarrow \frac12\beta_m\delta V_m\psi(\delta^2 H_m)\bm{e}_1$ \Comment{stored temporarily, as needed twice}
\Else
\Comment{Completed step too small $\Rightarrow$ time step repair}
\State $\tilde{\bm{x}} \leftarrow \frac12\beta_m\tilde{\delta} V_m\psi(\tilde{\delta}^2 H_m)\bm{e}_1$
\State $\tilde{\bm{v}} \leftarrow \beta_m\tilde{\delta} V_m\sigma(\tilde{\delta}^2H_m)\bm{e}_1$
\State Solve IVP~\eqref{ivp} on the time interval $(0, \delta-\tilde{\delta})$ with initial values $\tilde{\bm{x}}, \tilde{\bm{v}}$ via
\Statex \quad\quad\quad\quad any RT scheme, yielding $\bm{x}$
\EndIf
\State $\bm{v}_{k+1} \leftarrow \bm{v}_{k+1/2} + \bm{x}$
\EndFor

\end{algorithmic}
\end{algorithm}

The difficulty in most efficiently employing the Gautschi cosine scheme lies in choosing
a good step size $\delta$ (which needs to stay fixed throughout the time stepping process). 
Note that in our case of a constant inhomogeneity $\bm{g}$, the scheme~\eqref{eq:gautschi_onestep} actually yields the \emph{exact} solution of~\eqref{ivp}, i.e., $\bm{y}_{m} = \bm{y}(m\delta)$, if the $\psi$ and $\sigma$ functions are evaluated exactly (while it can be shown to have an error of $O(\delta^2)$ for non-constant $\bm{g}$; see~\cite[Theorem~3.1]{HochbruckLubich1999}). Thus, when choosing the step size, our main goal is to ensure that the actions of these functions
can be computed accurately enough with the available memory in order to not spoil the accuracy
of the final approximation.

Here, our residual concept for the $\psi$ and $\sigma$ functions comes in handy. We do 
not fix $\delta$ a priori, but instead determine it based on the residual, when 
evaluating the action of the $\sigma$ function in~\eqref{eq:gautschi_starting_vector}. 
We perform as many Krylov steps as the available memory permits and then choose $\delta$ 
as large as possible such that $\|\bm{r}^{(\sigma)}_{m}(\delta)\| \leq \tol$. 
When first approximating the action of the $\psi$ function in~\eqref{eq:gautschi_onestep}, 
we check whether this choice of $\delta$ also leads to $\|\bm{r}^{(\psi)}_{m}(\delta)\| \leq \tol$ (which will often be the case in practice). If it does, we fix $\delta$ and proceed with the time 
stepping scheme~\eqref{eq:gautschi_onestep}, each time approximating the action of the 
$\psi$ function using $m$ Krylov steps. In the unlikely event that the time step $\delta$
chosen for $\sigma$ is too large for $\psi$, we reduce $\delta$ appropriately and recompute
the action of $\sigma$ for this new value of $\delta$ before starting the time stepping (cf.~also 
the similar discussion for the RT restarting procedure in Section~\ref{subsec:rt_restarting}). Note that if $V_m^{(\sigma)}$ was discarded due to memory constraints, this requires performing another $m$ Krylov steps.

Since the actual convergence behavior of Krylov subspace approximations
depends on a starting vector, it may happen that the residual
in one of the $\psi$ evaluations in~\eqref{eq:gautschi_onestep}
does not satisfy the requirement $\|\bm{r}^{(\psi)}_{m}(\delta)\| \leq \tol$.
In this case we find a largest value $\tilde{\delta}\in(0,\delta)$ 
such that $\|\bm{r}^{(\psi)}_{m}(\tilde{\delta})\| \leq \tol$ holds,
set $\tilde{\bm{x}}(\tilde{\delta}):=\delta \psi(\tilde{\delta}^2 A)(-A\bm{y}_{k} + \bm{g})$
and bridge the time interval $(\tilde{\delta},\delta)$ by solving the IVP~\eqref{ivp} on the time interval $t\in(0,\delta-\tilde{\delta})$
with initial data $\bm{u}=\tilde{\bm{x}}(\tilde{\delta})$,
$\bm{v}=\tilde{\bm{x}}'(\tilde{\delta})$.
This IVP is solved numerically according to relation~\eqref{ym} with two additional matrix functions evaluations.
To reduce the likelihood that the chosen time step is too large for later cycles of the method, one can use a slightly smaller Krylov dimension for the initial evaluations of the $\sigma$ and $\psi$ function, e.g., replacing $m$ by $\lfloor\alpha \cdot m\rfloor$, where $0 < \alpha < 1$ is a ``safety factor'' (we use $\alpha = 0.85$ in our experiments).

We summarize the approach outlined above in Algorithm~\ref{alg:gautschi}.

\subsection{Two-pass Lanczos}\label{sec:2pl}
When $A$ is symmetric, a well-known alternative to restarting the Krylov method is the so-called ``two-pass Lanczos process'' discussed in, e.g.,~\cite{Borici2000,FrommerSimoncini2008,GuettelSchweitzer2021}, which we brief\/ly describe in the following. For symmetric $A$, the Arnoldi method reduces to the three-term recurrence Lanczos process, so that only three Krylov basis vectors need to be stored at a time in order to perform the orthogonalization (at least in exact arithmetic). As forming the Krylov iteration $\bm{y}_m$ still requires all basis vectors (in contrast to the solution of linear systems, where the short recurrence for the basis vectors translates into a short recurrence for the iterates), one proceeds as follows. In a first call to the Lanczos method, one computes the basis vectors by the three-term recurrence (discarding ``old'' basis vectors that are no longer needed) and assembles the (tridiagonal) matrix $H_m$. As soon as a suitable stopping criterion is fulfilled, the ``coefficient vectors''
\begin{equation*}
\bm{x}_m^{(\psi)}(t) = \frac12 \|-A\bm{u}+\bm{g}\|t^2\psi(t^2 H_m)\bm{e}_1, \qquad \bm{x}_m^{(\sigma)}(t) = \|\bm{v}\|t\sigma(t^2H_m)\bm{e}_1, 
\end{equation*}
needed for forming $\bm{y}_m(t)$ are computed. Then, in a second call, the basis vectors are recomputed and used for computing $\bm{y}_m(t)$ by iteratively updating the sum
\begin{equation*}
    \bm{y}_m(t) = \sum_{i=1}^m \left(\bm{x}_i^{(\psi)}(t)\bm{v}_i^{(\psi)} + \bm{x}_i^{(\sigma)}(t)\bm{v}_i^{(\sigma)}\right).
\end{equation*}
This way, available memory is no limitation for the applicability of the Krylov method, at the price of doubling the number of matrix vector products that need to be carried out. 
The possibility to avoid restarting in the Lanczos process by this two-pass construction allows to argue that ``there is absolutely no need to restart''~\cite{TalEzer2007}.  However, as our experiments presented below show, restarting may be more efficient than the two-pass Lanczos process; see also the corresponding discussion in~\cite{GuettelSchweitzer2021}.

In the context of two-pass Lanczos, our residual concept can be used as a problem-related stopping criterion, instead of using more simplistic error estimators (as, e.g., the norm of the difference of two iterates, as it is often done in practical computations).

We now make a few remarks related to this method.
\begin{remark}\label{rem:2pl_roundoff}
It is well-known that the computed Lanczos vectors quickly fail to be orthogonal to each other in the presence of round-off error. This loss of orthogonality typically leads to a delayed convergence compared to the exact method (or a method with explicit reorthogonalization). This observation raises the question whether stopping the iteration based on our residual concept is reasonable in an actual computation, or whether the relation~\eqref{rm} is too spoiled by round-off error, making it unusable (this problem can be expected to be less prominent for a restarted method where the number of iterations is kept low, so that loss of orthogonality is typically not as severe). 

Fortunately, even when orthogonality is lost, the Lanczos decomposition on which formula~\eqref{rm} for the residual is based still holds fairly accurately. Precisely,
\begin{equation}\label{eq:lanczos_decomposition_roundoff}
AV_m = V_mH_m + h_{m+1,m} \bm{v}_{m+1} \bm{e}_m^T + F_m, \text{where } F_m \in \mathbb{R}^{n \times m},\ \|F_m\| \lesssim \varepsilon \|A\|,
\end{equation}
with $\varepsilon$ denoting the unit round-off; see~\cite[Chapter~13, Section~4]{Parlett1998}. Thus, repeating the steps in the derivation of~\eqref{rm}, employing~\eqref{eq:lanczos_decomposition_roundoff} instead of~\eqref{eq:arnoldi_decomposition}, we find that in finite precision arithmetic, it holds
\begin{equation*}
\bm{r}_m(t) = -h_{m+1,m}\bm{v}_{m+1}\bm{e}_m^Tu(t) - F_mu(t),
\end{equation*}
indicating that the residual is not spoiled more than is to be expected by round-off error, so that it can safely be used as stopping criterion.
\end{remark}

\begin{remark}\label{rem:2pl_cost}
When comparing the computational cost of the two-pass Lanczos method to that of a restarted method, one has to keep in mind that the two-pass Lanczos comes with further relevant costs in addition to matrix vector products: checking a stopping criterion (be it residual-based or simply the difference between iterates) requires the evaluation of the $\psi$ and $\sigma$ functions at the tridiagonal matrix $H_m$. If a large number $m$ of iterations is necessary for convergence, frequent computation of these matrix functions induces a non-negligible additional computational cost (even when the stopping criterion is not checked in each iteration). By exploiting, e.g., the MRRR algorithm~\cite{DhillonParlett2004} to solve the tridiagonal eigenproblem for $H_m$, one can evaluate $f(H_m)$ using $\mathcal{O}(m^2)$ flops, which may not seem too problematic unless $m$ becomes really huge. However, in a typical implementation, one will check the stopping criterion regularly, typically after a fixed number of iterations. When the residual is checked each $k$th iteration, then the overall cost for tridiagonal matrix function evaluations \emph{across all iterations} scales as $\mathcal{O}(m^3)$, which certainly becomes noticeable for realistically occurring values of $m$; cf.~also our experiments in Section~\ref{sec:experiments}, in particular Figure~\ref{fig:t_vs_time_anisotropic}.
\end{remark}

\section{Analysis of the residual convergence}\label{sec:residual_convergence}
We now give some estimates on the norms of Krylov subspace residuals in terms of Faber and Chebyshev series. Faber series have been a useful tool for investigating convergence of
the Arnoldi method since~\cite{Eiermann1989}; see also \cite{BeckermannReichel2009,Knizh91}. 
For simplification of formulae we assume throughout this section that 
$\beta^{(\psi)} = \beta^{(\sigma)}=1$.

\subsection{Estimates in terms of the Faber series}

Let $\Phi_j$ be the Faber polynomials (refer to \cite{SuetinPankratiev1998}) 
for $W(A)$, the numerical range of $A$. 

The following assertion is an analogue of \cite[Proposition~1]{BotchevKnizhnerman2020} 
and \cite[Proposition~4.1]{BotchevKnizhnermanTyrtyshnikov2020}.
 
\begin{proposition} \label{P1}
Let 
\be \label{fun1}
f(z;t) := t\sigma(t^2z) = \frac{\sin(t\sqrt{z})}{\sqrt{z}},
\ee
  let us have the Faber series decomposition
\be \label{app3}
f(z;t) = \sum_{j=0}^{+\infty}f_j(t)\Phi_j(z), 
\qquad z\in W(A), \quad t\ge0,
\ee
where $t$ is considered as a parameter, and let $\bm{r}_m(t)$ be the underlying residual. Then
\be \label{appr2}
\|\bm{r}_m(t)\| \le 2\|A\|\sum_{j=m-1}^{+\infty}|f_j(t)|.
\ee
\end{proposition}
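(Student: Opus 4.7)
My plan is to start from the explicit residual formula~\eqref{rm}. In the current $\sigma$-function setting with $\beta^{(\sigma)}=1$, the projected IVP~\eqref{Gal_proj_sigma} has the closed-form solution $u(t)=t\sigma(t^2H_m)\bm{e}_1=f(H_m;t)\bm{e}_1$, so~\eqref{rm} becomes
$$
\bm{r}_m(t)=-h_{m+1,m}\,\bm{v}_{m+1}\,\bm{e}_m^T f(H_m;t)\bm{e}_1.
$$
Taking norms and using $\|\bm{v}_{m+1}\|=1$ together with the elementary bound $h_{m+1,m}\le\|A\|$ (immediate from $A\bm{v}_m=V_mH_m\bm{e}_m+h_{m+1,m}\bm{v}_{m+1}$ and orthonormality of $V_{m+1}$) reduces the task to proving $|\bm{e}_m^T f(H_m;t)\bm{e}_1|\le 2\sum_{j\ge m-1}|f_j(t)|$.

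I would then evaluate the Faber series~\eqref{app3} at $H_m$, which is legitimate since $W(H_m)\subseteq W(A)$, obtaining
$$
\bm{e}_m^T f(H_m;t)\bm{e}_1=\sum_{j=0}^{+\infty}f_j(t)\,\bm{e}_m^T\Phi_j(H_m)\bm{e}_1.
$$
The key structural observation is that $H_m$ is upper Hessenberg: a direct induction shows $H_m^k\bm{e}_1\in\mathrm{span}\{\bm{e}_1,\dots,\bm{e}_{k+1}\}$, and hence $\Phi_j(H_m)\bm{e}_1\in\mathrm{span}\{\bm{e}_1,\dots,\bm{e}_{j+1}\}$. Consequently $\bm{e}_m^T\Phi_j(H_m)\bm{e}_1=0$ for every $j\le m-2$, leaving only the tail $j\ge m-1$.

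For the surviving terms I would bound $|\bm{e}_m^T\Phi_j(H_m)\bm{e}_1|\le\|\Phi_j(H_m)\|$ and invoke a numerical-range inequality of Crouzeix--Beckermann type, $\|p(B)\|\le 2\max_{z\in W(A)}|p(z)|$, valid for any polynomial $p$ whenever $W(B)\subseteq W(A)$. Combined with the standard bound $\max_{z\in W(A)}|\Phi_j(z)|\le 1$ for Faber polynomials on a convex domain, this gives $\|\Phi_j(H_m)\|\le 2$ and assembles into~\eqref{appr2}.

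The main obstacle is this last step: the constant $2$ relies on a nontrivial numerical-range inequality, and one must verify that $W(A)$ (and its convex hull) is a legitimate domain for the Faber construction so that $\|\Phi_j\|$ is controlled uniformly on $\partial W(A)$. The remainder of the argument is essentially bookkeeping parallel to the first-order analogues in~\cite[Proposition~1]{BotchevKnizhnerman2020} and~\cite[Proposition~4.1]{BotchevKnizhnermanTyrtyshnikov2020}, the only new ingredient being the closed-form identity $u(t)=f(H_m;t)\bm{e}_1$ for the projected second-order IVP.
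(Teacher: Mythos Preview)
Your argument is correct and, if anything, more streamlined than the paper's: you start directly from the residual formula~\eqref{rm} and the closed form $u(t)=f(H_m;t)\bm{e}_1$, whereas the paper re-derives exactly the same expression by inserting the Faber expansion into $\bm{y}_m''+A\bm{y}_m$ and using the ODE $f_{tt}+zf=0$ to kill the $V_m$-part. From that point on the two proofs coincide: Hessenberg structure (equivalently $\deg\Phi_j=j$) truncates the series at $j\ge m-1$, and $h_{m+1,m}\le\|A\|$ together with $\|\Phi_j(H_m)\|\le 2$ yields~\eqref{appr2}.

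One caveat on your ``main obstacle'': the decomposition you sketch---a general Crouzeix-type bound $\|p(B)\|\le 2\max_{z\in W(A)}|p(z)|$ combined with $\max_{z\in W(A)}|\Phi_j(z)|\le 1$---is not the right justification. The first inequality with constant $2$ is the (still open) Crouzeix conjecture, so as stated it would only give $1+\sqrt{2}$ via Crouzeix--Palencia, and the second inequality does not hold for general convex $W(A)$ either. The paper instead invokes Beckermann's theorem (\cite[Theorem~1]{Beckermann2005}) directly: for Faber polynomials $\Phi_j$ associated with a convex compact set containing $W(H_m)$ one has $\|\Phi_j(H_m)\|\le 2$, without passing through a universal numerical-range constant. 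That single citation closes the gap you identified.
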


\begin{proof}
The superexponential convergence in $j$ and the smoothness of $f_j(t)$ in $t$ enable
one to differentiate the series (\ref{app3}) in $t$. The decomposition (\ref{app3}) then implies the decomposition of the approximant $\bm{y}_m$ as
\be \label{app6}
\bm{y}_m(t)=V_m^{(\sigma)}f(H_m;t) \bm{e}_1=V_m^{(\sigma)}\sum_{j=0}^{\infty} f_j(t)\Phi_j(H_m) \bm{e}_1.
\ee
Since
\be \label{diffeq1}
\frac{\partial^2 f}{\partial t^2} + zf = 0,
\ee
by differentiation we obtain
\bea \label{app1}
0 = z\sum_{j=0}^\infty f_j(t)\Phi_j(z) + \sum_{j=0}^\infty f_j^{\prime\prime}(t)\Phi_j(z)
= \sum_{j=0}^\infty \left[f_j(t)z+f_j^{\prime\prime}(t)\right]\Phi_j(z).
\eea
Exploiting (\ref{app6}) and (\ref{app1}) with $H_m$ substituted for $z$, and the 
equality $\deg\Phi_j=j$, derive
\beas
-\bm{r}_m(t) &=& \bm{y}_m^{\prime\prime}(t)+A\bm{y}_m(t) \\
&=& V_m^{(\sigma)}\sum_{j=0}^\infty f_j^{\prime\prime}(t)\Phi_j(H_m)\bm{e}_1 + AV_m^{(\sigma)}\sum_{j=0}^\infty f_j(t)\Phi_j(H_m)\bm{e}_1 \\
&=& \left( V_m^{(\sigma)}H_m+h_{m+1,m}\bm{v}_{m+1}^{(\sigma)}\bm{e}_m^\tT \right) \sum_{j=0}^\infty f_j(t)\Phi_j(H_m)\bm{e}_1
+ V_m^{(\sigma)}\sum_{j=0}^\infty f_j^{\prime\prime}(t)\Phi_j(H_m)\bm{e}_1 \\
&=& V_m^{(\sigma)}\sum_{j=0}^\infty \left[f_j^{\prime\prime}(t){I}_m+f_j(t)H_m\right]\Phi_j(H_m)\bm{e}_1
+ h_{m+1,m}\bm{v}_{m+1}^{(\sigma)}\bm{e}_m^\tT \sum_{j=0}^\infty f_j(t)\Phi_j(H_m)\bm{e}_1 \\
&=& h_{m+1,m}\bm{v}_{m+1}^{(\sigma)}\bm{e}_m^\tT \sum_{j=m-1}^\infty f_j(t)\Phi_j(H_m)\bm{e}_1,
\eeas
where ${I}_m$ denotes the size $m$ identity matrix.
The bound $\|\Phi_j(H_m)\|\le2$ (see \cite[Theorem~1]{Beckermann2005}) now implies (\ref{appr2}).
\end{proof}

\begin{remark}
Comparing estimate (\ref{appr2}) and the estimate on the error
\[
\|\bm{y}(t)-\bm{y}_m(t)\| \le 4\sum_{j=m}^{+\infty}|f_j(t)|
\]
(see \cite[Theorem~3.2]{BeckermannReichel2009} for the general case and
\cite[Theorem~3]{DruskinKnizhnerman1989} for the symmetric case) shows that 
the two upper bounds mainly differ in coefficients. Thus, we can conjecture
that the error and the residual behave similarly to each other.
\end{remark}

\begin{proposition} \label{P2}
If we replace the function (\ref{fun1}) with the parametric function 
\be \label{fun2}
f(z;t):= \frac12t^2\psi(t^2z) = \frac{1-\cos(t\sqrt{z})}{z}
\ee
in Proposition~\ref{P1}, then estimate (\ref{appr2}) remains valid.
\end{proposition}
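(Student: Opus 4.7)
The plan is to mirror the proof of Proposition~\ref{P1} with one structural adjustment: the new function satisfies an \emph{inhomogeneous} ODE in $t$, and the corresponding residual from Table~\ref{t:res} contains an extra inhomogeneity term, so I need to check that these two contributions cancel cleanly.

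First, I would replace the identity~\eqref{diffeq1} by a direct computation: for $f(z;t) = (1-\cos(t\sqrt z))/z$ one has $\partial_t f = \sin(t\sqrt z)/\sqrt z$ and $\partial_{tt} f = \cos(t\sqrt z)$, so
\[
\frac{\partial^2 f}{\partial t^2} + z f \;=\; \cos(t\sqrt z) + \bigl(1 - \cos(t\sqrt z)\bigr) \;=\; 1 \;=\; \Phi_0(z).
\]
Substituting the Faber expansion $f(z;t) = \sum_{j\ge 0} f_j(t)\Phi_j(z)$ and matching Faber coefficients yields the analogue of~\eqref{app1} with an extra $\Phi_0(z)$ on the right-hand side.

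Next, from Table~\ref{t:res} (with the normalization $\beta^{(\psi)}=1$, so that $\widetilde{\bm{g}} = V_m^{(\psi)}\bm{e}_1$), the relevant residual is
$\bm{r}_m(t) = -A\bm{y}_m(t) + V_m^{(\psi)}\bm{e}_1 - \bm{y}_m''(t)$, where $\bm{y}_m(t) = V_m^{(\psi)}\sum_{j\ge 0} f_j(t)\Phi_j(H_m)\bm{e}_1$. I would then repeat the calculation from the proof of Proposition~\ref{P1}, substituting $AV_m^{(\psi)} = V_m^{(\psi)}H_m + h_{m+1,m}\bm{v}_{m+1}^{(\psi)}\bm{e}_m^T$ and collecting terms:
\begin{align*}
-\bm{r}_m(t) &= V_m^{(\psi)}\sum_{j\ge 0}\bigl[f_j''(t)I_m + f_j(t)H_m\bigr]\Phi_j(H_m)\bm{e}_1 \\
&\quad + h_{m+1,m}\bm{v}_{m+1}^{(\psi)}\bm{e}_m^T\sum_{j\ge 0} f_j(t)\Phi_j(H_m)\bm{e}_1 \;-\; V_m^{(\psi)}\bm{e}_1.
\end{align*}
By the inhomogeneous Faber identity evaluated at $z = H_m$, the first sum equals $V_m^{(\psi)}\Phi_0(H_m)\bm{e}_1 = V_m^{(\psi)}\bm{e}_1$, which cancels exactly against the trailing $-V_m^{(\psi)}\bm{e}_1$.

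What remains is the familiar expression $-\bm{r}_m(t) = h_{m+1,m}\bm{v}_{m+1}^{(\psi)}\bm{e}_m^T\sum_{j\ge 0} f_j(t)\Phi_j(H_m)\bm{e}_1$, and the argument finishes exactly as in Proposition~\ref{P1}: for an upper Hessenberg $H_m$ one has $\bm{e}_m^T\Phi_j(H_m)\bm{e}_1 = 0$ whenever $\deg\Phi_j = j < m-1$, the Beckermann bound $\|\Phi_j(H_m)\| \le 2$ applies, and $h_{m+1,m}\le\|A\|$, giving~\eqref{appr2}. The only genuinely new step—and the one mild obstacle—is verifying that the constant-source term $\Phi_0(z)=1$ on the right-hand side of the ODE matches the inhomogeneity $\widetilde{\bm{g}} = V_m^{(\psi)}\bm{e}_1$ in the $\psi$-residual, so that the two contributions cancel; once this is done, the estimate transfers verbatim.
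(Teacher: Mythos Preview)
Your proof is correct and follows essentially the same route as the paper's: identify the inhomogeneous ODE $\partial_{tt}f + zf = 1 = \Phi_0(z)$, substitute the Faber expansion, evaluate the resulting identity at $z=H_m$ so that the $V_m^{(\psi)}$-term collapses to $V_m^{(\psi)}\bm{e}_1$ and cancels the source term in the $\psi$-residual, and then finish exactly as in Proposition~\ref{P1}. The only cosmetic difference is that the paper separates the $j=0$ summand before substituting $H_m$, whereas you substitute first and then cancel; the arguments are otherwise identical.
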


\begin{proof}
The proof of this result is given in Appendix~\ref{sec:appendix}.
\end{proof}

\subsection{The symmetric case}

We shall assume here that $A^T=A$ and the spectral interval of $A$ is contained 
in the segment $[0,1]$  (rescaling can easily be done if necessary).
In this case the Faber series for any function which is smooth on $[0,1]$ essentially 
reduces to a series in the shifted (onto $[0,1]$) Chebyshev polynomials $T_k^*$:
\be \label{FabCheb}
\Phi_0=T_0^*, \quad \Phi_k=2T_k^*\quad (k\ge1)
\ee
(see \cite[Chapter~II, Section~1, (21)]{SuetinPankratiev1998} and \cite[Chapter~I, Section~1, (42)]{Paszkowski1983}).

Following \cite{Paszkowski1983}, we shall denote respectively by $a_k[h]$ and 
$a_k^*[h]$ the $k$th plain and shifted Chebyshev coefficient of a function $h$ 
defined on $[-1,1]$ or $[0,1]$, so that
\[
h(x) = \sum_{k=0}^{+\infty}{\!}' a_k[h]T_k(x), \quad -1\le x\le 1,
\quad\mbox{or}\quad
h(x) = \sum_{k=0}^{+\infty}{\!}' a_k^*[h]T_k^*(x), \quad 0\le x\le 1,
\]
where $T_k$ and $T_k^*$ are the $k$th first kind plain or shifted Chebyshev 
polynomials respectively and the prime ${}\prime$ means that the term at $k=0$ is divided 
by two.

\begin{lemma} \label{L1}
The shifted Chebyshev series decomposition
\be \label{chebser1}
\frac{\sin(t\sqrt{z})}{\sqrt{z}}
=4\sum_{k=0}^\infty{\!}'(-1)^k \sum_{l=k}^\infty J_{2l+1}(t)T_k^*(z)
\ee
holds, where $J_k$ are the Bessel functions \cite[Section~9.1]{AbramowitzStegun1964}; that is,
\be \label{resL1}
a_k^*\left[\frac{\sin(t\sqrt{z})}{\sqrt{z}}\right]
=4(-1)^k \sum_{l=k}^\infty J_{2l+1}(t).
\ee
\end{lemma}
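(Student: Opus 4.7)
The plan is to derive the expansion by substituting a convenient parameterization for $z$ that turns shifted Chebyshev polynomials into cosines, and then to apply one of the classical Jacobi--Anger formulas for trigonometric functions of $\sin/\cos$.

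First, I would set $\sqrt{z}=\cos(\phi/2)$, i.e.\ $z=\cos^2(\phi/2)=\tfrac12(1+\cos\phi)$, which is precisely the standard parametrization making $T_k^*(z)=\cos(k\phi)$ on $[0,1]$. Then
\begin{equation*}
\frac{\sin(t\sqrt{z})}{\sqrt{z}}=\frac{\sin\!\bigl(t\cos(\phi/2)\bigr)}{\cos(\phi/2)}.
\end{equation*}
Applying the classical Jacobi--Anger identity
\begin{equation*}
\sin(t\cos\alpha)=2\sum_{k=0}^{\infty}(-1)^k J_{2k+1}(t)\cos\bigl((2k+1)\alpha\bigr)
\end{equation*}
with $\alpha=\phi/2$ gives
\begin{equation*}
\frac{\sin(t\sqrt{z})}{\sqrt{z}}=2\sum_{k=0}^{\infty}(-1)^k J_{2k+1}(t)\,\frac{\cos\bigl((2k+1)\phi/2\bigr)}{\cos(\phi/2)}.
\end{equation*}

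Second, I would expand the fraction $c_k(\phi):=\cos((2k+1)\phi/2)/\cos(\phi/2)$ as a finite linear combination of the $\cos(j\phi)$. Using the product-to-sum identity $\cos((2k+1)x)+\cos((2k-1)x)=2\cos(2kx)\cos x$ with $x=\phi/2$ yields the recurrence $c_k=2\cos(k\phi)-c_{k-1}$, $c_0=1$, whose solution is
\begin{equation*}
c_k(\phi)=(-1)^k+2\sum_{j=1}^{k}(-1)^{k-j}\cos(j\phi)=(-1)^k+2\sum_{j=1}^{k}(-1)^{k-j}T_j^*(z).
\end{equation*}
A quick induction (or explicit check for the first few $k$) confirms this closed form.

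Third, I would substitute this back and interchange the two summations. This gives
\begin{align*}
\frac{\sin(t\sqrt{z})}{\sqrt{z}}
&=2\sum_{k=0}^{\infty}J_{2k+1}(t)+4\sum_{k=1}^{\infty}\sum_{j=1}^{k}(-1)^{j}J_{2k+1}(t)T_j^*(z)\\
&=2\sum_{l=0}^{\infty}J_{l=0,\text{odd}}\,\cdots\;+4\sum_{j=1}^{\infty}(-1)^{j}T_j^*(z)\sum_{l=j}^{\infty}J_{2l+1}(t),
\end{align*}
which, after absorbing the $j=0$ term into the primed sum, is exactly~\eqref{chebser1} (and reading off the coefficient yields~\eqref{resL1}).

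The mildly delicate point will be justifying the interchange of the two infinite sums. Fortunately, the Bessel functions $J_{2l+1}(t)$ decay superexponentially in $l$ for fixed $t$ (since $|J_n(t)|\le(t/2)^n/n!$), and the $T_j^*(z)$ are uniformly bounded by $1$ on $[0,1]$, so the double series is absolutely and uniformly convergent on $z\in[0,1]$ and Fubini applies without difficulty. Apart from this, the only calculation that needs a little care is the recurrence-based expansion of $c_k(\phi)$, but this is elementary.
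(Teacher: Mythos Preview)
Your approach is correct and yields the claimed expansion, but it differs from the paper's proof. The paper substitutes $y=\sqrt{z}$ and simply invokes a ready-made Chebyshev expansion of $\sin(ty)/y$ from Paszkowski's tables, together with the identity $T_{2k}(\sqrt{z})=T_k^*(z)$; the whole argument is a two-line citation. You instead work from first principles: the parametrization $\sqrt{z}=\cos(\phi/2)$ reduces the problem to a Fourier cosine expansion, Jacobi--Anger supplies the Bessel coefficients, and the recurrence for $c_k(\phi)=\cos((2k{+}1)\phi/2)/\cos(\phi/2)$ converts half-integer harmonics into integer ones (equivalently, $T_{2k+1}(y)/y$ into a combination of $T_j^*(z)$). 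What your route buys is self-containment---no reliance on a specialized table of Chebyshev expansions---while the paper's route buys brevity. One cosmetic point: the second display in your ``Third'' step is garbled (the symbol $J_{l=0,\text{odd}}\cdots$ is evidently a placeholder for the $j=0$ contribution $2\sum_{l\ge0}J_{2l+1}(t)$); once that is written out, the interchange of sums gives exactly~\eqref{chebser1}, and your absolute-convergence justification via $|J_n(t)|\le (t/2)^n/n!$ is adequate.
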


\begin{proof}
Setting $y=\sqrt{z}$ and exploiting the formulae
\cite[Chapter~II, Section~11, (21); Chapter~I, Section~1, (44)]{Paszkowski1983}, derive
\beas
\frac{\sin(t\sqrt{z})}{\sqrt{z}} &=& \frac{\sin(ty)}{y}
=4\sum_{k=0}^\infty{\!}'(-1)^k \sum_{l=k}^\infty J_{2l+1}(t)T_{2k}(y) \\
&=&4\sum_{k=0}^\infty{\!}'(-1)^k \sum_{l=k}^\infty J_{2l+1}(t)T_{2k}(\sqrt{z})
=4\sum_{k=0}^\infty{\!}'(-1)^k \sum_{l=k}^\infty J_{2l+1}(t)T_k^*(z).\hfill
\eeas
\phantom{x}
\end{proof}

\begin{proposition} \label{P3}
If $t\le1$ and $m\ge2$, then the residual associated with function~(\ref{chebser1}) is estimated as
\be \label{resP3}
\|\bm{r}_m(t)\| \le 16\cdot\frac{(t/2)^{2m-1}}{(2m-1)!}.
\ee
\end{proposition}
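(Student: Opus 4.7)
The plan is to specialize Proposition~\ref{P1} to the symmetric case and make the Faber coefficients $f_j(t)$ explicit using Lemma~\ref{L1}. Since $A^T=A$ has spectrum in $[0,1]$, we have $\|A\|\leq1$, and by (\ref{FabCheb}) the Faber polynomials reduce to shifted Chebyshev polynomials: $\Phi_0=T_0^*$ and $\Phi_j=2T_j^*$ for $j\ge1$. Comparing the Faber expansion (\ref{app3}) of $f(z;t)=\sin(t\sqrt{z})/\sqrt{z}$ with its shifted Chebyshev expansion yields $f_j(t)=\tfrac12 a_j^*[f]$ for $j\ge1$, and Lemma~\ref{L1} then gives
\[
|f_j(t)| \;=\; 2\,\Bigl|\sum_{l=j}^\infty J_{2l+1}(t)\Bigr|, \qquad j\ge1.
\]

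Next I would bound the Bessel tail. For $t\le 1$ and $n\ge 1$ the series
$J_n(t)=\sum_{k=0}^\infty \frac{(-1)^k}{k!(n+k)!}(t/2)^{n+2k}$
is alternating with monotonically decreasing terms, since $(t/2)^2\le 1/4\le (k+1)(n+k+1)$ for all $k\ge0$. Hence $|J_n(t)|\le (t/2)^n/n!$, and applied to $n=2l+1$ this gives
\[
\Bigl|\sum_{l=j}^\infty J_{2l+1}(t)\Bigr| \;\le\; \sum_{l=j}^\infty \frac{(t/2)^{2l+1}}{(2l+1)!}.
\]

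Finally, I would insert these estimates into (\ref{appr2}). Using $\|A\|\le 1$,
\[
\|\bm{r}_m(t)\| \;\le\; 2\sum_{j=m-1}^\infty |f_j(t)| \;\le\; 4\sum_{j=m-1}^\infty\sum_{l=j}^\infty \frac{(t/2)^{2l+1}}{(2l+1)!}.
\]
To reach the stated constant I would estimate each inner tail by (twice) its leading term, using the fact that for $t\le 1$ and $j\ge m-1\ge 1$ the ratio of consecutive terms is bounded by $(1/4)/((2j+2)(2j+3))\le 1/40$, so
$\sum_{l=j}^\infty (t/2)^{2l+1}/(2l+1)! \le 2(t/2)^{2j+1}/(2j+1)!$. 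The same geometric argument applied to the outer sum over $j$ gives another factor of at most $2$. Multiplying the constants ($2$ from Proposition~\ref{P1}, $2$ from $f_j = a_j^*/2$ with the $4$ from Lemma~\ref{L1}, and $2\cdot 2$ from the two tail estimates) produces the bound $16\,(t/2)^{2m-1}/(2m-1)!$.

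The only technical point is verifying the Bessel-function inequality $|J_n(t)|\le (t/2)^n/n!$ in the regime $t\le 1$ and making sure both geometric tails (over $l$ and over $j$) can be controlled by a uniform constant when $m\ge 2$ and $t\le 1$; these are elementary but must be checked carefully to pin down the factor $16$.
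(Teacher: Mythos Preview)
Your proposal is correct and follows essentially the same route as the paper: specialize Proposition~\ref{P1} to the symmetric case via (\ref{FabCheb}), insert the explicit Chebyshev coefficients from Lemma~\ref{L1}, bound $|J_n(t)|\le (t/2)^n/n!$ (the paper cites \cite[9.1.62]{AbramowitzStegun1964} for this, you derive it from the alternating series), and finish with a geometric-ratio estimate on the resulting tail. The only cosmetic difference is that the paper first swaps the order of summation to write $\sum_{j\ge m-1}\sum_{l\ge j}|J_{2l+1}(t)|=\sum_{l\ge m-1}(l-m+2)|J_{2l+1}(t)|$ and then bounds this single weighted sum geometrically, whereas you bound the inner and outer tails separately; both arguments land on the constant $16$.
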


\begin{proof}
The proof of this result is given in Appendix~\ref{sec:appendix}.
\end{proof}

\begin{lemma} \label{L2}
The shifted Chebyshev coefficients of the function (\ref{fun2})
are determined by the equality
\be \label{resL2}
a_k^*\left[\frac{1-\cos(t\sqrt{z})}{z}\right]
={8}(-1)^k\sum_{l=0}^\infty (l+1)J_{2(k+l+1)}(t),
\qquad k\ge0,
\ee
where $J_k$ are the Bessel functions.
\end{lemma}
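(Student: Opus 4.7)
The plan is to derive Lemma~\ref{L2} from Lemma~\ref{L1} by integration in the time variable, exploiting the identity
\[
\frac{1-\cos(t\sqrt{z})}{z} = \int_0^t \frac{\sin(s\sqrt{z})}{\sqrt{z}}\, ds,
\]
which follows immediately from $\frac{d}{dt}\frac{1-\cos(t\sqrt{z})}{z}=\frac{\sin(t\sqrt{z})}{\sqrt{z}}$ and the vanishing of the left-hand side at $t=0$.

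First, I would substitute the Chebyshev decomposition~(\ref{chebser1}) of Lemma~\ref{L1} into the integrand and, after justifying term-by-term integration in $s$ (both series converge superexponentially in $k$ and $l$, uniformly in $s\in[0,t]$, since $|J_{2l+1}(s)|$ decays very fast in $l$), obtain
\[
a_k^*\!\left[\frac{1-\cos(t\sqrt{z})}{z}\right] = 4(-1)^k \sum_{l=k}^{\infty} \int_0^t J_{2l+1}(s)\, ds.
\]

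Next, I would use the standard Bessel identity
\[
\int_0^t J_{2l+1}(s)\, ds = 2\sum_{m=l+1}^{\infty} J_{2m}(t),\qquad l\ge0,
\]
which can be obtained from the recurrence $2J_n'(s) = J_{n-1}(s) - J_{n+1}(s)$ by telescoping together with the evaluation $1 = J_0(t) + 2\sum_{k\ge1} J_{2k}(t)$. Plugging this in yields
\[
a_k^*\!\left[\frac{1-\cos(t\sqrt{z})}{z}\right] = 8(-1)^k \sum_{l=k}^{\infty}\sum_{m=l+1}^{\infty} J_{2m}(t).
\]

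Finally, I would swap the order of summation (which is permitted because of the absolute convergence, again by the rapid decay of Bessel functions in their order). For each $m\ge k+1$ the index $l$ runs over $k,k+1,\dots,m-1$, so that $m-k$ copies of $J_{2m}(t)$ are produced; a reindexing $m=k+l+1$ yields precisely
\[
a_k^*\!\left[\frac{1-\cos(t\sqrt{z})}{z}\right] = 8(-1)^k \sum_{l=0}^{\infty} (l+1)\, J_{2(k+l+1)}(t),
\]
which is~(\ref{resL2}). I expect the main technical point to be the clean justification of interchanging the integral with the Chebyshev series and then swapping the two Bessel sums; both are standard once the superexponential decay estimate $|J_n(t)|\lesssim (t/2)^n/n!$ is invoked, so no serious obstacle is anticipated.
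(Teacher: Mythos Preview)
Your argument is correct and takes a genuinely different route from the paper's proof. The paper does not integrate Lemma~\ref{L1} in~$t$; instead it writes down the shifted Chebyshev expansion of $\cos(t\sqrt{z})$ directly (via \cite[Chapter~II, Section~11, (19)]{Paszkowski1983}), reads off $a_k^*[1-\cos(t\sqrt{z})]=-2(-1)^kJ_{2k}(t)$ for $k\ge1$, and then applies a ready-made ``division-by-$z$'' formula for shifted Chebyshev coefficients \cite[Chapter~II, Section~9, (66)]{Paszkowski1983},
\[
a_k^*\!\left[\tfrac{h(z)}{z}\right]=4\sum_{l=0}^\infty(-1)^l(l+1)\,a_{k+l+1}^*[h],
\]
which produces~(\ref{resL2}) in one line. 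Your approach trades these two citations for the integral identity $\int_0^t J_{2l+1}=2\sum_{m>l}J_{2m}(t)$ and a double-sum interchange; its merit is that it is self-contained within the paper (it only uses Lemma~\ref{L1} and elementary Bessel recurrences), whereas the paper's proof is shorter but leans on two external formulas from Paszkowski. Both the term-by-term integration and the reordering of sums are cleanly covered by the bound $|J_n(s)|\le (s/2)^n/n!$ you cite, so there is no gap.
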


\begin{proof}
Due to the formulae \cite[Chapter~II, Section~11, (19); Chapter~I, Section~1, (44)]{Paszkowski1983} we have the shifted Chebyshev decomposition
\beas
\cos(t\sqrt{z}) = 2\sum_{k=0}^\infty (-1)^kJ_{2k}(t)T_{2k}(\sqrt{z})
= 2\sum_{k=0}^\infty (-1)^kJ_{2k}(t)T_{k}^*(z).
\eeas
Thus,
\[
a_k^*[1-\cos(t\sqrt{z})] = -2(-1)^kJ_{2k}(t), \qquad k\ge1.
\]
Finally, the formula \cite[Chapter~II, Section~9, (66)]{Paszkowski1983} leads to
\beas
a_k^*\left[\frac{1-\cos(t\sqrt{z})}{z}\right]
&=& 4\sum_{l=0}^\infty (-1)^l\binom{l+1}{1}a_{k+l+1}^*[1-\cos(t\sqrt{z})] \\
&=& 4\sum_{l=0}^\infty (-1)^l(l+1)(-2)(-1)^{k+l+1}J_{2(k+l+1)}(t)\\
&=& 8(-1)^k\sum_{l=0}^\infty (l+1)J_{2(k+l+1)}(t),
\eeas
which concludes the proof.
\end{proof}

\begin{proposition} \label{P4}
If $t\le1$ and $m\ge2$, then the residual associated with function~(\ref{fun2}) is estimated as
\be \label{resP4}
\|\bm{r}_m(t)\| \le \frac{128}{15}\cdot\frac{(t/2)^{2m}}{(2m)!}.
\ee
\end{proposition}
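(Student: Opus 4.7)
The approach parallels the proof of Proposition~\ref{P3}, with Lemma~\ref{L1} replaced by Lemma~\ref{L2}. The plan is to start from Proposition~\ref{P2}, which gives $\|\bm{r}_m(t)\| \le 2\|A\|\sum_{j=m-1}^{\infty}|f_j(t)|$, and to convert the Faber coefficients $f_j(t)$ into shifted Chebyshev coefficients via the identity~\eqref{FabCheb}. Since $m\ge 2$ we have $m-1\ge 1$, so $f_j(t)=\tfrac{1}{2}a_j^*[f]$ throughout the tail, and using $\|A\|\le 1$ (spectrum contained in $[0,1]$) the bound becomes
\[
\|\bm{r}_m(t)\| \le \sum_{j=m-1}^{\infty}|a_j^*[f]|.
\]

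Next, I would substitute the explicit formula from Lemma~\ref{L2} and estimate the Bessel functions by their leading Taylor terms via $|J_n(t)|\le (t/2)^n/n!$, which holds for $t\le 1$ and $n\ge 0$ because the defining alternating series has monotonically decreasing magnitudes in this range. A Fubini-type reindexing $n=j+l+1$ collapses the resulting double sum: for fixed $n\ge m$, summing $(l+1)$ over the admissible pairs $(j,l)$ with $j+l+1=n$ and $j\ge m-1$ produces $\sum_{k=1}^{n-m+1}k=\tfrac{1}{2}(n-m+1)(n-m+2)$, whence
\[
\|\bm{r}_m(t)\| \le 4\sum_{n=m}^{\infty}(n-m+1)(n-m+2)\frac{(t/2)^{2n}}{(2n)!}.
\]

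It then remains to bound this series by $\tfrac{32}{15}\cdot(t/2)^{2m}/(2m)!$ so that multiplication by the prefactor $4$ yields the claimed $128/15$. The leading term ($n=m$) contributes exactly $2\cdot(t/2)^{2m}/(2m)!$. For the tail, writing $i=n-m$ and using $(2m)!/(2m+2i)!\le 1/(2m+1)^{2i}\le 5^{-2i}$ (valid for $m\ge 2$) together with $(t/2)^{2i}\le 4^{-i}$, each tail term is bounded by $(i+1)(i+2)/100^i$. A closed-form evaluation,
\[
\sum_{i=1}^{\infty}(i+1)(i+2)\,x^{i}=\frac{2}{(1-x)^{3}}-2 \quad \text{at } x=\tfrac{1}{100},
\]
gives approximately $0.061$, which comfortably lies below $2/15$. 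Hence the bracket is at most $32/15$ and the final coefficient $128/15$ follows. The main obstacle is this quantitative bookkeeping: the stated constant is relatively sharp, so care is needed in the tail estimate to keep the cumulative overshoot under $2/15$ relative to the leading term; conceptually the argument is entirely analogous to Proposition~\ref{P3}, with the exponent shifted from $2m-1$ to $2m$ because the $\psi$-coefficients in Lemma~\ref{L2} involve $J_{2(k+l+1)}$ rather than $J_{2l+1}$.
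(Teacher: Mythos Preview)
Your proof is correct and follows essentially the same route as the paper: start from the Faber--Chebyshev bound of Proposition~\ref{P2}, insert Lemma~\ref{L2}, reindex the double sum to obtain $4\sum_{n\ge m}(n-m+1)(n-m+2)(t/2)^{2n}/(2n)!$, and control the tail. The only cosmetic difference is the final step: the paper bounds the ratio of consecutive terms by $1/16$ and sums the resulting geometric series to get the factor $16/15$, whereas you bound each tail term directly by $(i+1)(i+2)100^{-i}$ and sum explicitly. Both arguments land comfortably under the stated constant $128/15$.
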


\begin{proof}
The proof of this result is given in Appendix~\ref{sec:appendix}.
\end{proof}

\section{Numerical experiments}\label{sec:experiments}
We now show the results of several numerical experiments in order to demonstrate the practical performance of the different algorithms presented in this paper. All experiments are carried out in MATLAB R2022a on a PC with an AMD Ryzen 7 3700X 8-core CPU with clock rate 3.60GHz and 32 GB RAM. In all experiments, we restrict the maximum Krylov dimension to $m=30$ and if $A$ is symmetric, we use the three-term recurrence Lanczos process without reorthogonalization instead of the full Arnoldi recurrence in all considered algorithms. For computing the action of the $\psi$ and $\sigma$ functions evaluated at the Hessenberg matrix $H_m$, we employ a Schur decomposition $Q^TH_mQ = T$ in conjunction with Parlett's recurrence~\cite{Parlett1976}. When evaluating $\psi$ or $\sigma$ on the diagonal of the upper triangular matrix $T$, we replace the formulas~\eqref{psi_sigma} by their respective $(1,1)$ Pad\'e approximants when the function argument is smaller than $10^{-3}$ in order to increase robustness.

\subsection{3D wave equation}

As a first example, we consider semi-discretizations of the three-dimensional wave equation in $u = u(t,x,y,z)$,
\begin{equation}\label{eq:3d_wave_equation}
\left\{\begin{aligned}
& u_{tt} = k_x u_{xx} + k_y u_{yy} + k_z u_{zz}, \\
& u(0,x,y,z) = u_0(x,y,z), \quad u_t(0,x,y,z) = v_0(x,y,z),
\end{aligned}\right.
\end{equation}
on the domain $\Omega = (0,1) \times (0,1) \times (0,1)$, with coefficients $k_x,k_y,k_z \in \mathbb{R}^+$ and functions $u_0,v_0:\mathbb{R}^3 \longrightarrow \mathbb{R}$ specifying the initial conditions. We discretize the second spatial derivatives by the usual seven-point finite difference stencil on a uniform $n_x \times n_y \times n_z$ grid and impose homogeneous Dirichlet boundary conditions. This results in a system of the form~\eqref{ivp}, where
\begin{equation*}\label{eq:A_wave}
A = k_zL_z \otimes I_y \otimes I_x + I_z \otimes k_yL_y \otimes I_x + I_z \otimes I_y \otimes k_xL_x \in \mathbb{R}^{n_xn_yn_z \times n_xn_yn_z}
\end{equation*}
with $I_i, i \in \{x,y,z\}$ the identiy matrix of size $n_i$ and
\begin{equation}\label{eq:Laplacian_discretization}
L_{i} = \frac{1}{h_i^2}\text{tridiag}(1,-2,1) \in \mathbb{R}^{n_i \times n_i}, \quad h_i = 1/(n_i + 1) \quad \text{ for } i \in \{x,y,z\}
\end{equation}
and the vectors $\bm{u}, \bm{v}$ result from discretization of the initial conditions $u_0, v_0$.

\paragraph{Isotropic case $k_x = k_y = k_z = 1$}
We begin by considering the isotropic case, in which the coefficients $k_x = k_y = k_z$ coincide (allowing to write the wave equation in the more compact, classical form $u_{tt} = c^2 \Delta u$ with $c^2 := k_x$). As initial conditions, we consider the functions $u_0(x,y,z) = (1-x)^3(1-y^2)(1-z^2)$ and $v_0(x,y,z) \equiv 1$. We aim to approximate the solution $u$ at time $t = 1$.

We compare the performance of the RT restarting Lanczos method both in the simultaneous (Algorithm~\ref{alg:second_order_RT_sim}) and the sequential version (Algorithm~\ref{alg:second_order_RT_seq}), the Gautschi cosine scheme with residual-based step size selection (Algorithm~\ref{alg:gautschi}) and the two-pass Lanczos method described in Section~\ref{sec:2pl}. Note that---in order to simulate the situation in a limited memory environment---we restrict the maximum dimension of the Krylov subspaces to $m/2$ in the simultaneous version of the RT scheme, as \emph{two} Krylov bases have to be kept in memory at the same time.

\begin{figure}
\centering
\tikzsetnextfilename{tol_vs_acc_isotropic}
\pgfplotsset{height=0.43\linewidth,width=0.975\linewidth,compat=1.10,every axis/.append style={legend style={/tikz/every even column/.append style={column sep=6pt}}}}
\pgfplotsset{every tick label/.append style={font=\scriptsize}}

\noindent%
\begin{tikzpicture}[scale=1]%
    \begin{loglogaxis}[legend columns=1,cycle list name=list_std, grid=major, 
   	xlabel={\small residual tolerance}, ylabel={\small relative accuracy}, xmin=5e-9,xmax=2e-1,
   	title={\small $40 \times 40 \times 40$ isotropic wave equation},x dir=reverse]

\addplot+[] table [x ={tol},y ={simul}] {figures/data/tol_vs_acc_isotropic.dat};\addlegendentry{\scriptsize  RT Lanczos, simultaneous}
\addplot+[] table [x ={tol},y ={seq}] {figures/data/tol_vs_acc_isotropic.dat};\addlegendentry{\scriptsize  RT Lanczos, sequential}
\addplot+[] table [x ={tol},y ={gautschi}] {figures/data/tol_vs_acc_isotropic.dat};\addlegendentry{\scriptsize  RT Gautschi}
\addplot+[] table [x ={tol},y ={2pl}] {figures/data/tol_vs_acc_isotropic.dat};\addlegendentry{\scriptsize  Two-pass Lanczos}

\end{loglogaxis}
\end{tikzpicture}
\caption{Relative accuracy of the solution for the isotropic wave equation computed by the different algorithms, depending on the residual accuracy that is used.}
\label{fig:tol_vs_acc_isotropic}
\end{figure}
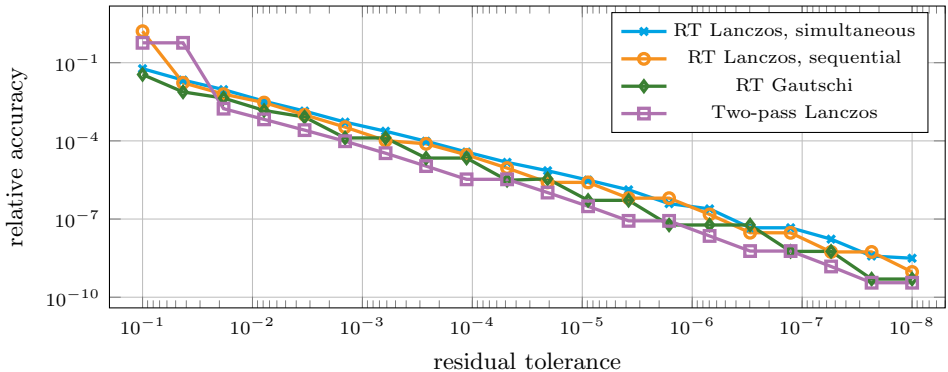

We begin by illustrating how the imposed residual tolerance $\tol$ translates into a (relative) accuracy of the solution, $\|\bm{y}(t)-\widetilde{\bm{y}}(t)\|/\|\bm{y}(t)\|$, where $\bm{y}(t)$ is the exact solution of the IVP and $\widetilde{\bm{y}}(t)$ the solution computed by one of the considered algorithms. To do so, we discretize~\eqref{eq:3d_wave_equation} on a $40 \times 40 \times 40$ grid and run all algorithms with different residual tolerances ranging from $10^{-1}$ to $10^{-8}$. The resulting final accuracies for the different algorithms are shown in Figure~\ref{fig:tol_vs_acc_isotropic}. We observe that the sequential RT Lanczos method, the Gautschi scheme and two-pass Lanczos produce approximations with comparable accuracy (except for very crude tolerances, where the sequential RT Lanczos method and the Gautschi scheme produce less accurate approximations), which mostly lies even a bit below the order of the imposed residual tolerance. The simultaneous RT Lanczos method is the least accurate, but its accuracy still lies in the order of the residual tolerance.

\begin{table}[]
\small
\centering
\caption{3D isotropic wave equation test problem. Number of matvecs required (and in brackets accuracy reached) by the three schemes for different grids and accuracy requirements ($^\star$: The number of matvecs for the two-pass Lanczos method does not fully account for the computational cost of the method, see Remark~\ref{rem:2pl_cost}).}
\label{tab:3dwave_isotropic}
\begin{tabular}{ccccc}
\hline\hline
tolerance & RT simultaneous & RT sequential & RT Gautschi & 2P Lanczos$^\star$ \\
\hline
\multicolumn{5}{c}{$10\times10\times10$ grid}  \\
\hline
{\tt 1e-04}  &     60 ({\tt 1.0e-05})  &       47 ({\tt 7.7e-06})  &     47 ({\tt 7.7e-06})   &    98  ({\tt 7.7e-06})\\
{\tt 1e-06}  &     77 ({\tt 1.9e-07})  &       52 ({\tt 2.3e-08})  &     73 ({\tt 3.7e-08})   &   110  ({\tt 2.3e-08})\\
\hline
\multicolumn{5}{c}{$20\times20\times20$ grid}  \\
\hline
{\tt 1e-04}  &    114 ({\tt 1.5e-05})  &       99 ({\tt 1.3e-05})  &     75 ({\tt 4.8e-06})   &   174  ({\tt 6.1e-06})\\
{\tt 1e-06}  &    139 ({\tt 1.8e-07})  &      110 ({\tt 8.4e-08})  &     85 ({\tt 1.2e-07})   &   186  ({\tt 6.5e-08})\\
\hline
\multicolumn{5}{c}{$40\times40\times40$ grid}  \\
\hline
{\tt 1e-04}  &    217 ({\tt 3.2e-05})  &      182 ({\tt 2.9e-05})  &    121 ({\tt 2.2e-05})   &   322  ({\tt 3.3e-06})\\
{\tt 1e-06}  &    267 ({\tt 3.1e-07})  &      212 ({\tt 1.5e-07})  &    140 ({\tt 5.9e-08})   &   338  ({\tt 2.2e-08})\\
\hline
\multicolumn{5}{c}{$80\times80\times80$ grid}  \\
\hline
{\tt 1e-04}  &    449 ({\tt 2.7e-05})  &      363 ({\tt 4.8e-05})  &    223 ({\tt 1.9e-05})   &   606  ({\tt 6.5e-06})\\
{\tt 1e-06}  &    521 ({\tt 5.9e-07})  &      410 ({\tt 1.9e-07})  &    249 ({\tt 3.8e-07})   &   626  ({\tt 4.8e-08})\\
\hline
\end{tabular}
\end{table}

Next, we want to gauge the performance of the methods in terms of matrix vector products (matvecs) for varying problem sizes. Precisely, we discretize~\eqref{eq:3d_wave_equation} on grids of size $10 \times 10 \times 10$ to $80 \times 80 \times 80$, leading to matrices of size $1,\!000$ to $512,\!000$. To obtain comparable results, in light of Figure~\ref{fig:tol_vs_acc_isotropic}, we impose the same residual tolerances for all considered methods, namely $10^{-4}$ and $10^{-6}$. The resulting number of matrix vector products and the final relative accuracy of the solution are shown in Table~\ref{tab:3dwave_isotropic} for all combinations of grid size, tolerance and algorithm.

For all choices of grid size and tolerance, the Gautschi cosine scheme is the most efficient of the methods in terms of matrix vector products. For smaller grids, it performs comparably to the RT restarting methods and with increasing grid size it clearly outperforms them, reducing the number of matrix vector products by slightly less than the expected factor of $2$; cf.~the corresponding discussion in Section~\ref{subsec:gautschi}. This reduction in matrix vector products does not come at the cost of a reduced accuracy. In fact, the Gautschi scheme even produces solutions that are slightly more accurate than those produced by the RT schemes. For all considered cases, the two-pass Lanczos method performs worst in terms of matrix vector products, but it delivers a little bit more accurate solutions than the other algorithms for larger grid sizes.

This can be explained by two reasons.  
First, a nonrestarted Lanczos method
for evaluating the matrix functions $\psi$ and $\sigma$ asymptotically 
converges faster than any restarted Lanczos-based method.  Note that
the situation for evaluating the inverse matrix function (i.e.,
for linear system solution) is different: here convergence in restarted 
methods is asymptotically not necessarily worse than with no restarting.
Second, specifically for our RT restarting approach, an error accumulation
may take place, as an additional error (due to a nonzero residual) is
introduced at each restart.

\begin{figure}
\centering
\tikzsetnextfilename{tol_vs_acc_anisotropic}
\pgfplotsset{height=0.43\linewidth,width=0.975\linewidth,compat=1.10,every axis/.append style={legend style={/tikz/every even column/.append style={column sep=6pt}}}}
\pgfplotsset{every tick label/.append style={font=\scriptsize}}

\noindent%
\begin{tikzpicture}[scale=1]%
    \begin{loglogaxis}[legend columns=1,cycle list name=list_std, grid=major, 
   	xlabel={\small residual tolerance}, ylabel={\small relative accuracy}, xmin=5e-9,xmax=2e-1, ymin = 5e-11, ymax = 4e3,
   	title={\small $40 \times 40 \times 40$ anisotropic wave equation},x dir=reverse]

\addplot+[] table [x ={tol},y ={simul}] {figures/data/tol_vs_acc_anisotropic.dat};\addlegendentry{\scriptsize  RT Lanczos, simultaneous}
\addplot+[] table [x ={tol},y ={seq}] {figures/data/tol_vs_acc_anisotropic.dat};\addlegendentry{\scriptsize  RT Lanczos, sequential}
\addplot+[] table [x ={tol},y ={gautschi}] {figures/data/tol_vs_acc_anisotropic.dat};\addlegendentry{\scriptsize  RT Gautschi}
\addplot+[] table [x ={tol},y ={2pl}] {figures/data/tol_vs_acc_anisotropic.dat};\addlegendentry{\scriptsize  Two-pass Lanczos}

\end{loglogaxis}
\end{tikzpicture}
\caption{Relative accuracy of the solution for the anisotropic wave equation computed by the different algorithms, depending on the residual accuracy that is used.}
\label{fig:tol_vs_acc_anisotropic}
\end{figure}
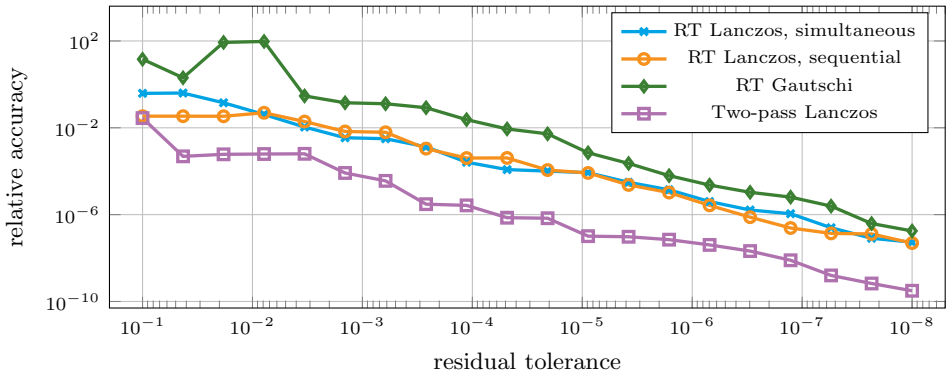

\paragraph{Anisotropic case $k_x = 10^4, k_y = 10^2, k_z = 1$}
Next, we choose the coefficients in the wave equation~\eqref{eq:3d_wave_equation} to be of highly different order of magnitude, $k_x = 10^4, k_y = 10^2, k_z = 1$, leading to a strong anisotropy in the problem. The initial value functions are
$u_0(x,y,z)=\sum_{i,j,k=1}^3 \sin(i\pi x)\sin(j\pi y)\sin(k\pi z)$,
$v_0(x,y,z)=\sum_{i,j,k=1}^3 \lambda_{ijk}$ $\cdot\sin(i\pi x)\sin(j\pi y)\sin(k\pi z)$,
with $\lambda_{ijk}=\pi^2(i^2k_x + j^2k_y + k^2k_z)$.
Apart from that, the setup of this experiment is exactly like in the previous one. We again start by investigating how the residual tolerance influences the final accuracy, with the results depicted in Figure~\ref{fig:tol_vs_acc_anisotropic}. We observe that for this model problem, the accuracy that the different RT schemes deliver is slightly lower than the imposed residual tolerance, with the Gautschi scheme typically being a bit less accurate than the other two RT schemes (and showing signs of instability for low residual tolerances). The two-pass Lanczos algorithm delivers the most accurate solution for all tolerances, which is typically at least an order of magnitude higher than that of the RT schemes.

\begin{table}[]
\small
\centering
\caption{3D anisotropic wave equation test problem. Number of required matvecs and CPU time (and in brackets accuracy reached) by the three schemes for different grids and accuracy requirements. (${}^\star$: The number of matvecs for the two-pass Lanczos method does not fully account for the computational cost of the method, see Remark~\ref{rem:2pl_cost}); ${}^\dagger$: To obtain comparable results, we tighten the tolerance in the RT Gautschi scheme by a factor of 10 and loosen the tolerance in the 2P Lanczos scheme by a factor of 10).} \label{tab:3dwave_anisotropic}
\begin{tabular}{ccccc}
\hline\hline
tolerance & RT simultaneous & RT sequential & RT Gautschi & 2P Lanczos$^\ast$ \\
\hline
\multicolumn{5}{c}{$10\times10\times10$ grid}  \\
\hline
{\tt 1e-04}  &   2355 ({\tt 8.6e-04})  &       60 ({\tt 9.4e-03})  &    899 ({\tt 6.5e-04})   &   201  ({\tt 3.4e-04})\\
 & 0.36s & 0.02s & 0.11s & 0.02s \\
\hdashline
{\tt 1e-06}  &   2854 ({\tt 9.5e-06})  &     1486 ({\tt 3.4e-05})  &   1210 ({\tt 2.7e-06})   &   281  ({\tt 1.4e-06})\\
 & 0.42s & 0.21s & 0.12s & 0.01s \\
\hline
\multicolumn{5}{c}{$20\times20\times20$ grid}  \\
\hline
{\tt 1e-04}  &   4858 ({\tt 5.6e-04})  &     3977 ({\tt 1.0e-03})  &   2424 ({\tt 3.9e-04})   &   441  ({\tt 4.1e-05})\\
 & 1.61s & 1.28s & 0.55s & 0.06s \\
\hdashline
{\tt 1e-06}  &   5322 ({\tt 2.6e-06})  &     4520 ({\tt 7.2e-06})  &   2494 ({\tt 5.7e-06})   &   721  ({\tt 2.1e-06})\\
 & 1.80s & 1.39s & 0.55s & 0.13s \\
\hline
\multicolumn{5}{c}{$40\times40\times40$ grid}  \\
\hline
{\tt 1e-04}  &   9457 ({\tt 2.5e-04})  &     8744 ({\tt 4.7e-04})  &   4794 ({\tt 7.5e-03})   &  1121  ({\tt 3.5e-05})\\
 & 15.04s & 11.36s & 4.65s & 1.10s \\
\hdashline
{\tt 1e-06}  &  10152 ({\tt 8.6e-06})  &     8988 ({\tt 4.0e-06})  &   4993 ({\tt 5.3e-06})   &  2881  ({\tt 3.5e-08})\\
 & 16.62s & 12.14s & 4.41s & 5.43s \\
\hline
\multicolumn{5}{c}{$80\times80\times80$ grid}  \\
\hline
{\tt 1e-04}  &  18745 ({\tt 1.5e-04})  &    19147 ({\tt 3.3e-03})  &  13727 ({\tt 4.5e-04})   &  2281  ({\tt 2.9e-05})\\
 & 189.52s & 146.53s & 109.72 & 14.80s \\
\hdashline
{\tt 1e-06}  &  19421 ({\tt 1.1e-06})  &    17966 ({\tt 4.6e-06})  &   9793 ({\tt 2.8e-06})   &  5241  ({\tt 1.6e-07})\\
 & 193.92s & 130.18s & 66.54s & 48.19s \\
\hline
\end{tabular}
\end{table}

Taking the result of this first investigation into account, we do not impose the same tolerance for all methods in the following when comparing matrix vector products. Instead, we tighten the accuracy requirement by a factor of 10 for the Gautschi scheme (i.e., we use tolerances $10^{-5}$ and $10^{-7}$) and loosen it by a factor of 10 for the two-pass Lanczos method (i.e., we use tolerances $10^{-3}$ and $10^{-5}$). This way, the final accuracy reached by the different solvers is better comparable. Table~\ref{tab:3dwave_anisotropic} shows the number of matrix vector products, required CPU time and the final relative accuracy for all combinations of grid size, tolerance and algorithm. In addition, in Table~\ref{tab:3dwave_anisotropic_t10} we also report the results obtained when approximating the solution of the same problem, but at the later time $t = 10$, which makes the problem more difficult to solve.

As we now report CPU times, a few comments regarding the implementation of the two-pass Lanczos method are in order. When evaluating the stopping criterion, we compute the eigendecomposition of $H_m$ using the MRRR algorithm (via a call to its LAPACK implementation \texttt{dstegr}), and we check the stopping criterion every $10$ iterations. Changing this value affects the performance, and the chosen value seems reasonable to us.

We observe that for $t=1$, the two-pass Lanczos method clearly performs best in terms of matrix vector products and also produces the most accurate solutions. Its run time is lowest among all methods in this experiment, but the difference gets smaller as the problem size increases, indicating that the RT methods show a better scaling behavior. Among the RT schemes, the Gautschi scheme is again the most efficient for larger problem instances, outperforming the other two schemes by a factor of about 2 in terms of matrix vector products. 

It might seem contra-intuitive in Table~\ref{tab:3dwave_anisotropic} that for the $80 \times 80 \times 80$ grid, the RT Gautschi scheme requires fewer matrix vector products for the stricter tolerance $10^{-6}$ than for 
the looser tolerance $10^{-4}$. This happens because due to the crude tolerance, a too large step size is selected, which can then not be met any longer in later iterations with other starting vectors. In that case, as the Gautschi scheme does not allow to alter the step size in between iterations, the ``step size repair'' described in Section~\ref{subsec:gautschi} needs to be employed for completing the time step. As this step size repair requires the evaluation of both $\psi$ and $\sigma$ functions (and thus building two Krylov subspaces), it typically leads to a larger number of matrix vector products than a Gautschi scheme for a smaller step size which requires no repair step. To avoid this problem, it is in general advisable to use not too crude tolerances in the Gautschi scheme. As the method (as well as the other RT schemes) is rather insensitive to the residual tolerance and scales incredibly well with increasing accuracy requirement (see in particular also the forthcoming Table~\ref{tab:3dwave_anisotropic_t10} which also contains results for tolerance $10^{-8}$), this is typically not expected to lead to high additional costs.

For $t = 10$, the situation differs a bit from that when approximating the solution at $t = 1$. The two-pass Lanczos still needs the smallest number of matrix vector products, but in terms of run time, it is clearly outperformed by the Gautschi scheme (and to a lesser extent by the sequential RT restarting scheme) for larger problem instances. In particular, it is also visible that it shows a worse scaling behavior concerning the CPU time, but also regarding the number of matrix-vector products: In the RT schemes, when increasing the grid resolution by a factor of two, the number of matrix-vector products also increases by a factor of two. In contrast, the two-pass Lanczos method shows an increase by a factor roughly between three and four. However, the final accuracy that two-pass Lanczos reaches is much higher than for the RT schemes, which can likely be attributed to the fact that a larger error accumulation takes place the larger the final time is. We therefore also include results for residual tolerance of $10^{-8}$ in order to illustrate that the RT schemes are indeed capable of reaching higher accuracies and this is not an inherent limitation of the methodology.

\begin{table}[]
\small
\centering
\caption{3D anisotropic wave equation test problem with final time $t=10$. Number of required matvecs and CPU time (and in brackets accuracy reached) by the three schemes for different grids and accuracy requirements. (${}^\star$: The number of matvecs for the two-pass Lanczos method does not fully account for the computational cost of the method, see Remark~\ref{rem:2pl_cost}); ${}^\dagger$: To obtain comparable results, we tighten the tolerance in the RT Gautschi scheme by a factor of 10 and loosen the tolerance in the 2P Lanczos scheme by a factor of 10).} \label{tab:3dwave_anisotropic_t10}
\setlength{\tabcolsep}{5pt}
\begin{tabular}{ccccc}
\hline\hline
tolerance & RT simultaneous & RT sequential & RT Gautschi & 2P Lanczos$^\ast$ \\
\hline
\multicolumn{5}{c}{$10\times10\times10$ grid}  \\
\hline
{\tt 1e-04}  &  24375 ({\tt 8.6e-03})  &    15424 ({\tt 3.0e-02})  &  11081 ({\tt 6.8e-04})   &   361  ({\tt 4.1e-06})\\
 & 2.43s & 1.48s & 1.21s & 0.01s \\
\hdashline
{\tt 1e-06}  &  27576 ({\tt 4.5e-05})  &    19402 ({\tt 1.5e-04})  &  11637 ({\tt 2.0e-05})   &   481  ({\tt 3.4e-08})\\
 & 2.74s & 1.79s & 1.34s & 0.02s \\
\hdashline
{\tt 1e-08}  &  30494 ({\tt 3.2e-07})  &    23824 ({\tt 3.0e-07})  &  14415 ({\tt 2.4e-08})   &   561  ({\tt 2.2e-08})\\
 & 3.04s & 2.23s & 1.39s & 0.03s \\
\hline
\multicolumn{5}{c}{$20\times20\times20$ grid}  \\
\hline
{\tt 1e-04}  &  50279 ({\tt 5.5e-03})  &    43333 ({\tt 7.0e-03})  &  23645 ({\tt 1.0e-03})   &  1321  ({\tt 4.8e-06})\\
 & 11.79s & 9.52s & 5.70s & 0.30s \\
\hdashline
{\tt 1e-06}  &  56040 ({\tt 4.0e-05})  &    46718 ({\tt 6.9e-05})  &  24429 ({\tt 1.2e-06})   &  1841  ({\tt 7.3e-08})\\
 & 13.71s & 10.25s & 5.79s & 0.71s \\
\hdashline
{\tt 1e-08}  &  56724 ({\tt 2.2e-07})  &    48515 ({\tt 8.2e-08})  &  28783 ({\tt 3.5e-09})   &  2361  ({\tt 1.8e-09})\\
 & 13.88s & 10.30s & 6.53s & 1.36s \\
\hline
\multicolumn{5}{c}{$40\times40\times40$ grid}  \\
\hline
{\tt 1e-04}  &  95485 ({\tt 1.8e-03})  &    89023 ({\tt 4.5e-03})  &  47917 ({\tt 1.9e-04})   &  6041  ({\tt 4.5e-06})\\
 & 101.26s & 81.19s & 42.82s & 23.59s \\
\hdashline
{\tt 1e-06}  &  105654 ({\tt 7.8e-05})  &    93436 ({\tt 6.3e-05})  &  49480 ({\tt 9.7e-07})   &  9641  ({\tt 8.1e-08})\\
 & 105.08s & 80.43s & 45.74s & 89.27s \\
\hdashline
{\tt 1e-08}  &  108415 ({\tt 5.1e-07})  &    96162 ({\tt 1.8e-07})  &  49534 ({\tt 2.0e-09})   & 16841  ({\tt 1.2e-09})\\
 & 105.83s & 82.59s & 46.93s & 504.73s \\
\hline
\multicolumn{5}{c}{$80\times80\times80$ grid}  \\
\hline
{\tt 1e-04}  &  191517 ({\tt 1.6e-03})  &    179896 ({\tt 2.6e-03})  &  95788 ({\tt 1.5e-03})   & 14441  ({\tt 1.1e-06})\\
 & 1764.20s & 1095.76s & 647.46s & 370.60s \\
\hdashline
{\tt 1e-06}  &  199654 ({\tt 2.2e-05})  &    177549 ({\tt 4.4e-05})  &  98384 ({\tt 6.5e-05})   & 25801  ({\tt 1.3e-08})\\
 & 1859.11s & 1086.36s & 616.70s & 1898.00s \\
\hdashline
{\tt 1e-08}  &  210882 ({\tt 7.0e-07})  &    192096 ({\tt 2.6e-07})  &  99024 ({\tt 8.0e-08})   & 41201  ({\tt 2.3e-09})\\
 & 1986.96s & 1193.28s & 616.48s & 8275.04s \\
\hline
\end{tabular}
\end{table}

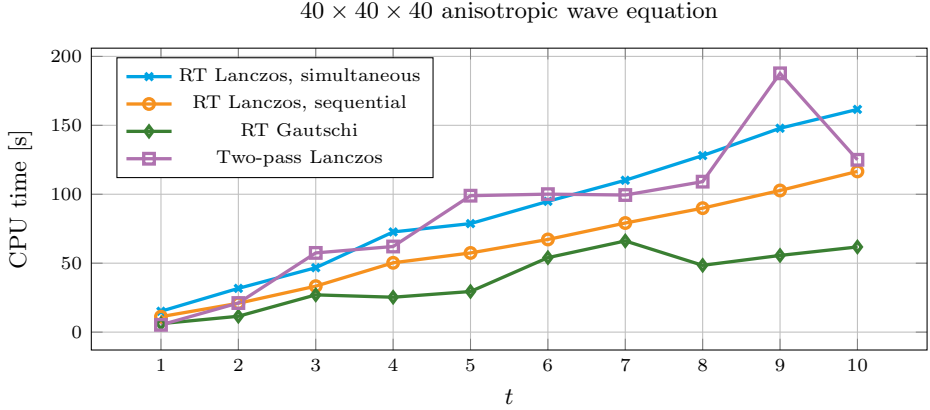
\begin{figure}
\centering
\tikzsetnextfilename{t_vs_time_anisotropic}
\pgfplotsset{height=0.43\linewidth,width=0.975\linewidth,compat=1.10,every axis/.append style={legend style={/tikz/every even column/.append style={column sep=6pt}}}}
\pgfplotsset{every tick label/.append style={font=\scriptsize}}

\noindent%
\begin{tikzpicture}[scale=1]%
    \begin{axis}[legend columns=1,cycle list name=list_std, grid=major, 
   	xlabel={\small $t$}, ylabel={\small CPU time [s]}, legend pos = north west, 
   	title={\small $40 \times 40 \times 40$ anisotropic wave equation}]

\addplot+[] table [x ={t},y ={simul}] {figures/data/t_vs_time_anisotropic.dat};\addlegendentry{\scriptsize  RT Lanczos, simultaneous}
\addplot+[] table [x ={t},y ={seq}] {figures/data/t_vs_time_anisotropic.dat};\addlegendentry{\scriptsize  RT Lanczos, sequential}
\addplot+[] table [x ={t},y ={gautschi}] {figures/data/t_vs_time_anisotropic.dat};\addlegendentry{\scriptsize  RT Gautschi}
\addplot+[] table [x ={t},y ={2pl}] {figures/data/t_vs_time_anisotropic.dat};\addlegendentry{\scriptsize  Two-pass Lanczos}

\end{axis}
\end{tikzpicture}
\caption{CPU time used by the different algorithms for solving the anisotropic wave equation, depending on the final time $t$.}
\label{fig:t_vs_time_anisotropic}
\end{figure}

In Figure~\ref{fig:t_vs_time_anisotropic}, we illustrate how the CPU times evolve for the $40 \times 40 \times 40$ grid for final times between the two values $t=1$ to $t=10$ covered in Tables~\ref{tab:3dwave_anisotropic} and~\ref{tab:3dwave_anisotropic_t10}, when imposing a residual tolerance of $10^{-6}$. One can observe that while for $t=1$, the run time of two-pass Lanczos and Gautschi methods are quite close to each other (as is already visible from Table~\ref{tab:3dwave_anisotropic}), for higher values of $t$ (i..e, for more difficult problems), the Gautschi scheme always outperforms the other schemes including the two-pass method.

\begin{figure}
\centering
\tikzsetnextfilename{tol_vs_acc_transport}
\pgfplotsset{height=0.43\linewidth,width=0.975\linewidth,compat=1.10,every axis/.append style={legend style={/tikz/every even column/.append style={column sep=6pt}}}}
\pgfplotsset{every tick label/.append style={font=\scriptsize}}

\noindent%
\begin{tikzpicture}[scale=1]%
    \begin{loglogaxis}[legend columns=1,cycle list name=list_std, grid=major, 
   	xlabel={\small residual tolerance}, ylabel={\small relative accuracy},  xmin=5e-9,xmax=1e-1,
   	title={\small Transport equation with decay, grid size $512$},x dir=reverse]

\addplot+[] table [x ={tol},y ={simul}] {figures/data/tol_vs_acc_transport.dat};\addlegendentry{\scriptsize  RT Arnoldi, simultaneous}
\addplot+[] table [x ={tol},y ={seq}] {figures/data/tol_vs_acc_transport.dat};\addlegendentry{\scriptsize  RT Arnoldi, sequential}
\addplot+[] table [x ={tol},y ={gautschi}] {figures/data/tol_vs_acc_transport.dat};\addlegendentry{\scriptsize  RT Gautschi}
\addplot+[] table [x ={tol},y ={firstorder}] {figures/data/tol_vs_acc_transport.dat};\addlegendentry{\scriptsize  first order RT}

\end{loglogaxis}
\end{tikzpicture}
\caption{Relative accuracy of the solution for the transport equation with decay computed by the different algorithms, depending on the residual accuracy that is used.}
\label{fig:tol_vs_acc_transport}
\end{figure}
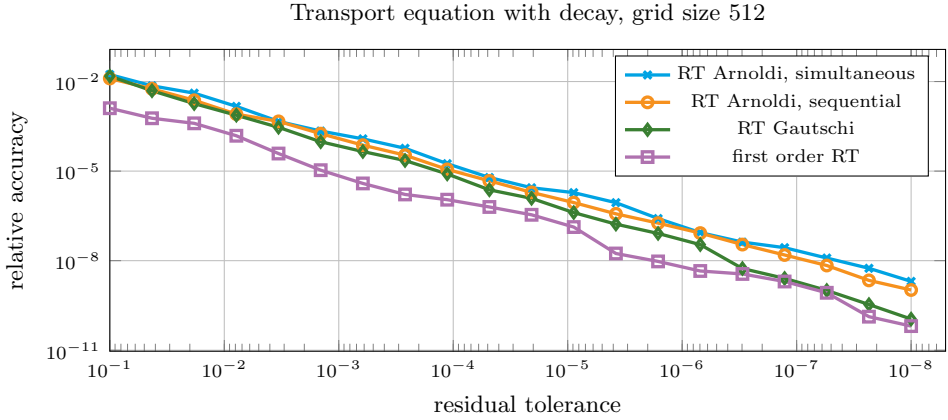

\subsection{Transport equation with decay}
We now turn to a problem resulting in a nonsymmetric matrix in~\eqref{ivp}. Consider the following transport equation with decay,
\begin{equation}\label{eq:transport_decay}
\left\{\begin{aligned}
& u_t = -c u_x - \alpha u, \\
& u(0,x) = u_0(x)
\end{aligned}\right.
\end{equation}
with parameters $c, \alpha > 0$ and a given function $u_0: \mathbb{R} \longrightarrow \mathbb{R}$. Assuming sufficient smoothness in $u$ and $u_0$, equation~\eqref{eq:transport_decay} can be turned into a second-order PDE
\begin{equation}\label{eq:transport_decay_second_order}
\left\{\begin{aligned}
& u_{tt} = c^2 u_{xx} + 2c\alpha u_x + \alpha^2 u, \\
& u(0,x) = u_0(x), \quad u_t(0,x) = u_0^\prime(x) - \alpha u_0(x).
\end{aligned}\right.
\end{equation}
We want to solve~\eqref{eq:transport_decay_second_order} on the domain $\Omega = (0,1)$, imposing homogeneous Dirichlet boundary conditions. Discretizing both the first and  second spatial derivative by second-order centralized finite differences with $n_x$ interior grid points leads to an IVP of the form~\eqref{ivp} with $A = -c^2L_x - 2\alpha cD_x - \alpha^2 I \in \mathbb{R}^{n_x \times n_x}$, where $L_x$ is defined in~\eqref{eq:Laplacian_discretization} and
$$D_x = \frac{1}{2h_x}\text{tridiag}(-1,0,1) \in \mathbb{R}^{n_x \times n_x}.$$
Clearly, as $L$ and $I$ are symmetric and $D$ is nonsymmetric, the resulting matrix $A$ is nonsymmetric. Note that we need to choose $\alpha$ small enough so that $-c^2L_x - \alpha^2 I$, the symmetric part of $A$, is positive semidefinite.

In our experiment we choose $c = 0.3, \alpha = 1$ and the initial conditions $u_0(x) = e^{-500(x-0.5)^2}$ and want to approximate the solution $u$ at time $t=1$. We compare both versions of the RT restarting Arnoldi method and the Gautschi scheme and impose the same accuracy requirements as before. As the problem is nonsymmetric, we cannot use a two-pass Lanczos approach here. Instead, to have another method to compare to, we use the residual time restarting method for the matrix exponential from~\cite{BotchevKnizhnerman2020}\footnote{available at \url{https://team.kiam.ru/botchev/expm/}}, applied to the equivalent first order ODE system~\eqref{ivp1}. As the matrix $\mathcal{A}$ defined in~\eqref{mtx2} is of size $2n \times 2n$, this requires working with vectors of twice the size, as already explained in Section~\ref{subsec:firstoder}. Thus, to have orthogonalization cost and memory footprint comparable to our methods that work directly on the second order formulation, we run the method with maximum Krylov dimension $m/2$. Note that while technically, matrix vector products with a matrix of twice the size have to be computed, the simple structure of $\mathcal{A}$ allows them to be performed at a cost that is only slightly higher than that for multiplication with $A$. We therefore do not distinguish between matrix vector products with $A$ and $\mathcal{A}$ when reporting our results.

As for the other test cases, we start by investigating how the residual tolerance translates into relative accuracy in the solution, see Figure~\ref{fig:tol_vs_acc_transport}, where we have used $512$ discretization points. We can observe that for this model problem, the three schemes working on the second-order formulation produce solutions with relative accuracy in the same order of magnitude as the imposed residual tolerance (or even slightly more accurate). Interestingly, the Gautschi cosine scheme delivers the most accurate solution among those methods, while the simultaneous RT Arnoldi scheme is slightly less accurate than the sequential version. The first order RT scheme working on the formulation~\eqref{ivp1} is about one order of magnitude more accurate than the other methods.

\begin{table}[]
\small
\centering
\caption{1D transport equation with decay test problem. Number of matvecs required (and in brackets accuracy reached) by the four schemes for different grids and accuracy requirements (${}^\dagger$: To obtain comparable results, we loosen the tolerance in the first order RT scheme by a factor of 10).}\label{tab:1dtransport}\begin{tabular}{ccccc}
\hline\hline
tolerance${}^\dagger$ & RT simultaneous & RT sequential & RT Gautschi & first order RT \\
\hline
\multicolumn{5}{c}{ grid size $128$}  \\
\hline
{\tt 1e-04}  &     91 ({\tt 3.5e-05})  &       86 ({\tt 8.9e-06})  &     69 ({\tt 3.4e-06}) &    104 ({\tt 3.4e-05})  \\
{\tt 1e-06}  &    110 ({\tt 3.3e-07})  &       96 ({\tt 1.2e-07})  &     74 ({\tt 2.3e-08}) &    134 ({\tt 2.8e-07})  \\
\hline
\multicolumn{5}{c}{ grid size $256$}  \\
\hline
{\tt 1e-04}  &    168 ({\tt 1.8e-05})  &      154 ({\tt 1.0e-05})  &    103 ({\tt 2.6e-06}) &    191 ({\tt 1.6e-05})  \\
{\tt 1e-06}  &    191 ({\tt 2.2e-07})  &      169 ({\tt 1.1e-07})  &    111 ({\tt 2.0e-08}) &    239 ({\tt 2.7e-07})  \\
\hline
\multicolumn{5}{c}{ grid size $512$}  \\
\hline
{\tt 1e-04}  &    317 ({\tt 1.5e-05})  &      293 ({\tt 1.1e-05})  &    221 ({\tt 6.0e-06}) &  374 ({\tt 5.0e-06})  \\
{\tt 1e-06}  &    350 ({\tt 1.5e-07})  &      319 ({\tt 1.0e-07})  &    223 ({\tt 6.1e-08}) &  465 ({\tt 1.6e-07})  \\
\hline
\multicolumn{5}{c}{ grid size $1024$}  \\
\hline
{\tt 1e-04}  &    635 ({\tt 1.7e-05})  &      582 ({\tt 1.7e-05})  &    451 ({\tt 6.8e-06}) &  885 ({\tt 4.6e-06})  \\
{\tt 1e-06}  &    674 ({\tt 1.5e-07})  &      619 ({\tt 9.5e-08})  &    436 ({\tt 4.2e-08}) &  829 ({\tt 3.8e-08})  \\
\hline
\end{tabular}
\end{table}

For comparing the required number of matrix vector products for the different algorithms, we consider discretizations of~\eqref{eq:transport_decay_second_order} on grids of size 128 to 1024. Let us note that the problem sizes occurring in this example of course do not necessitate using restarts in practice, but it is still instructive to study the behavior of the different algorithms for a nonsymmetric, ill conditioned matrix ($\kappa(A) \approx 1.7 \cdot 10^5$). We report the number of matrix vector products and the final relative accuracy of the solution in Table~\ref{tab:1dtransport}, again for all combinations of grid size, tolerance and algorithm. In light of the observations reported in Figure~\ref{fig:tol_vs_acc_transport}, we loosen the tolerance in the first order RT scheme by a factor of 10 to obtain better comparable results.

We observe that the Gautschi scheme outperforms the other RT methods, as it requires fewer matrix vector products (typically saving about a factor 1.4) while at the same time yielding a higher accuracy. Between the two RT restarting schemes, the sequential version shows slightly better performance both in terms of matrix vector products and final accuracy. The first order RT scheme requires the highest number of matrix vector products in all test cases. For the smaller grid sizes, it delivers a solution which is about one order of magnitude less accurate than that of the Gautschi scheme, while for the largest grid it is slightly more accurate (but also requires almost twice the number of matrix vector products).

\section{Conclusions}\label{sec:conclusions}
Several conclusions can be made.  
\begin{enumerate}
\item
The introduced residual concept appears to be a reliable way to control
convergence in Krylov subspace methods for evaluating the $\psi$ and
$\sigma$ matrix functions.
\item
Qualitative behavior of the residuals as a function of time allows to
extend the residual-time (RT) Krylov subspace restarting technique to the $\psi$ and
$\sigma$ matrix functions. The proposed residual-time (RT) restarting
proves to be robust and efficient.
\item 
An important interesting conclusion is that an RT restarted Lanczos process 
for evaluating  the $\psi$ and $\sigma$ matrix functions
can be more efficient than the non-restarted two-pass Lanczos process.
The efficiency gain is not only due to the increased
overhead costs in the non-restarted Lanczos process to evaluate the projected
tridiagonal matrix but also in terms of total number of 
required matrix vector products. 
\item
Compared to solving second-order IVPs by straightforward evaluation of the 
$\psi$ and $\sigma$ matrix functions, the Gautschi time integration scheme 
proves to be more efficient.  As our experiments indicate, when both
integration methods are implemented with incorporated residual-based
stopping criterion and RT restarting, the Gautschi scheme typically requires
up to a factor two fewer matrix vector multiplications.
\end{enumerate}

One important research question which can be addressed is a proper choice 
of the restart length.  Another effect which also should be studied is an 
error accumulation in the Gautschi scheme due to the inexact Krylov subspace
$\psi$ function evaluations.  We hope to be able to address these questions in
the future.

\paragraph{Acknowledgments} We want to thank the anonymous referees for their careful reading and for making several suggestions that improved the manuscript.

\appendix
\section{Derivation of~\eqref{yex}}\label{sec:appendix_new}
In this section, we show that~\eqref{yex} solves the initial value problem~\eqref{ivp}.

First, observe that the initial condition on $\bm{y}(0)$ is fulfilled, as
\[\bm{y}(0) = \bm{u} + 0\cdot\psi(0\cdot A)(-A\bm{u}+\bm{g}) + 0\cdot \sigma(0 \cdot A)\bm{v} = \bm{u}.\]
For checking the other relations, we need the first two derivatives of $t^2\psi(t^2A)$ and $t\sigma(t^2A)$. 
The first derivatives are given in~\eqref{eq:first_derivatives_psi_sigma}.  Hence,
\begin{equation}\label{eq:second_derivatives_psi_sigma}
\begin{aligned}
\frac{\text{d}^2}{\text{d} t^2}\ \frac12t^2\psi(t^2A) &= \frac{\text{d}}{\text{d} t}\  t\sigma(t^2A) = I-\frac12t^2A\psi(t^2 A), \\
\frac{\text{d}^2}{\text{d} t^2}\  t\sigma(t^2A) &= \frac{\text{d}}{\text{d} t}\ \left(I-\frac12t^2A\psi(t^2 A)\right) = -tA\sigma(t^2A).
\end{aligned}
\end{equation}
Using~\eqref{eq:first_derivatives_psi_sigma}, we have
\begin{align*}
\bm{y}^\prime(t) &= 
\frac{\text{d}}{\text{d} t}\left( \frac12 t^2\psi(t^2 A)(-A\bm{u}+\bm{g}) + t\sigma(t^2A)\bm{v}\right) \\
&= t\sigma(t^2A)(-A\bm{u}+\bm{g}) + \left(I-\frac12t^2A\psi(t^2 A)\right) \bm{v}\\
&= \bm{v} + t\sigma(t^2A)(-A\bm{u}+\bm{g}) -\frac12t^2A\psi(t^2 A) \bm{v},
\end{align*}
from which $\bm{y}^\prime(0) = \bm{v}$ follows. Finally, using~\eqref{eq:second_derivatives_psi_sigma}, we find
\begin{align*}
\bm{y}^{\prime\prime}(t) &= \frac{\text{d}^2}{\text{d} t^2}\left( \frac12 t^2\psi(t^2 A)(-A\bm{u}+\bm{g}) + t\sigma(t^2A)\bm{v} \right)\\
&= \left(I-\frac12t^2A\psi(t^2 A)\right)(-A\bm{u}+\bm{g}) - tA\sigma(t^2A)\bm{v} \\
&= -A\bm{u} -\frac12t^2A\psi(t^2 A)(-A\bm{u} +\bm{g}) - tA\sigma(t^2A)\bm{v} + \bm{g}\\
&= -A\left(
(\bm{u} + \frac12t^2\psi(t^2 A)(-A\bm{u} +\bm{g}) + t\sigma(t^2A)\bm{v})
\right)+ \bm{g}\\
&= -A\bm{y}(t) + \bm{g},
\end{align*}
where, at the last step, relation~\eqref{yex} is used.

\section{Proofs of Propositions~\ref{P2}, \ref{P3}, and~\ref{P4}}\label{sec:appendix}
\begin{proof}[Proof of Proposition~\ref{P2}]
Since $\frac{\partial^2 f}{\partial t^2} = \cos(t\sqrt{z})$,
function (\ref{fun2}) satisfies the differential equation in $t$
\be \label{diffeq2}
\frac{\partial^2 f}{\partial t^2} + zf - 1 = 0.
\ee
Compared to equation~(\ref{diffeq1}), equation~(\ref{diffeq2}) contains an extra
constant term. This forces us to specially handle zero degree terms in the 
remaining part of the proof.

As $\Phi_0(z)=1$, it follows from (\ref{diffeq2}) that
\be \label{zeroser}
0 = [f_0(t)z+f_0^{\prime\prime}(t)-1]\Phi_0(z)
+ \sum_{j=1}^\infty [f_j(t)z+f_j^{\prime\prime}(t)]\Phi_j(z).
\ee
All the Faber coefficients in the right-hand side of (\ref{zeroser}) must be zero.
We then derive for the associated residual
\beas
& &-\bm{r}_m(t) \\
&=& \bm{y}_m^{\prime\prime}(t)+A\bm{y}_m(t)-\bm{v}_1^{(\psi)} \\
&=& V_m^{(\psi)}\sum_{j=0}^\infty f_j^{\prime\prime}(t)\Phi_j(H_m)\bm{e}_1 + AV_m^{(\psi)}\sum_{j=0}^\infty f_j(t)\Phi_j(H_m)\bm{e}_1
-V_m^{(\psi)}\Phi_0(H_m)\bm{e}_1 \\
&=&(V_m^{(\psi)}H_m+h_{m+1,m}\bm{v}_{m+1}^{(\psi)}\bm{e}_m^\tT) \sum_{j=0}^\infty f_j(t)\Phi_j(H_m)\bm{e}_1\\
& &+ V_m^{(\psi)}\sum_{j=0}^\infty f_j^{\prime\prime}(t)\Phi_j(H_m)\bm{e}_1 - V_m^{(\psi)}\Phi_0(H_m)\bm{e}_1\\
&=& V_m^{(\psi)}\sum_{j=1}^\infty [f_j^{\prime\prime}(t){I}_m+H_mf_j(t)]\Phi_j(H_m)\bm{e}_1 + V_m^{(\psi)}[f_0^{\prime\prime}(t){I}_m+H_mf_0(t)-{I}_m]\Phi_0(H_m)\bm{e}_1 \\
& & + h_{m+1,m}\bm{v}_{m+1}^{(\psi)}\bm{e}_m^\tT \sum_{j=0}^\infty f_j(t)\Phi_j(H_m)\bm{e}_1\\
&=&h_{m+1,m}\bm{v}_{m+1}^{(\psi)}\bm{e}_m^\tT \sum_{j=m-1}^\infty f_j(t)\Phi_j(H_m)\bm{e}_1.
\eeas
This again implies~(\ref{appr2}).
\end{proof}

\begin{proof}[Proof of Proposition~\ref{P3}]
Thanks to (\ref{appr2}), (\ref{FabCheb}) and (\ref{resL1}), we have
\beas
\|\bm{r}_m(t)\| &\le& 2\sum_{j=m-1}^\infty|f_j(t)| 
\le 8\sum_{j=m-1}^\infty \left| \sum_{l=j}^\infty J_{2l+1}(t) \right|
\le 8\sum_{j=m-1}^\infty \sum_{l=j}^\infty \left| J_{2l+1}(t) \right| \\
&=& 8 \sum_{l=m-1}^\infty (l-m+2)|J_{2l+1}(t)|.
\eeas
Applying \cite[formula~9.1.62]{AbramowitzStegun1964} gives
\[
\sum_{l=m-1} (l-m+2)|J_{2l+1}(t)|
\le \sum_{l=m-1}^\infty (l-m+2)\frac{(t/2)^{2l+1}}{(2l+1)!}.
\]
Let us majorize the last series with a geometric one. Since
\beas
\frac{l-m+3}{l-m+2} \cdot \frac{(t/2)^{2l+3}}{(t/2)^{2l+1}} \cdot
\frac{(2l+1)!}{(2l+3)!}
= \frac{l-m+3}{l-m+2} \cdot \left(\frac{t}{2}\right)^2 \cdot
\frac{1}{(2l+2)(2l+3)} \\
< \frac{t^2}{2(2l+2)(2l+3)} < \frac{1}{2},
\eeas
we obtain
\[
\sum_{l=m-1}^\infty (l-m+2)\frac{(t/2)^{2l+1}}{(2l+1)!}
\le 2\frac{(t/2)^{2m-1}}{(2m-1)!}.
\]
Accounting this yields (\ref{resP3}).
\end{proof}

\begin{proof}[Proof of Proposition~\ref{P4}]
In view of (\ref{resL2}) we have
\beas
\|\bm{r}_m(t)\| &\le& 8\sum_{j=m-1}^\infty \sum_{l=0}^\infty (l+1)|J_{2(j+l+1)}(t)|
= 8\sum_{k=m}^\infty \left[\sum_{l=0}^{k-m}(l+1)\right]|J_{2k}(t)| \\
&=&8 \sum_{k=m}^\infty \frac{(k-m+2)(k-m+1)}{2} |J_{2k}(t)|\\
&=&4 \sum_{k=m}^\infty (k-m+2)(k-m+1) |J_{2k}(t)| \\
&\le& 4 \sum_{k=m}^\infty (k-m+2)(k-m+1) \frac{(t/2)^{2k}}{(2k)!}.
\eeas
Since
\beas
& &(k+1-m+2)(k+1-m+1)\frac{(t/2)^{2k+2}}{(2k+2)!}
:\left[(k-m+2)(k-m+1)\frac{(t/2)^{2k}}{(2k)!}\right] \\
&=& \frac{k-m+3}{k-m+1}\cdot\left(\frac{t}{2}\right)^2\cdot\frac{1}{(2k+1)(2k+2)}
\le \frac{3t^2}{4\cdot3\cdot4} \le \frac{1}{16},
\eeas
we obtain (\ref{resP4}):
\[
\|\bm{r}_m(t)\| \le 4\cdot\frac{16}{15}\cdot2\cdot\frac{(t/2)^{2m}}{(2m)!}
=\frac{128}{15}\cdot\frac{(t/2)^{2m}}{(2m)!}.
\]
\end{proof}

\bibliography{matrixfunctions,Mike_bib}
\bibliographystyle{abbrv}

\end{document}